\newtheorem{theo}{Theorem}[section]
\newtheorem{lemma}[theo]{Lemma}
\newtheorem{defi}[theo]{Definition}
\newtheorem{prop}[theo]{Proposition}
\newtheorem{remark}[theo]{Remark}
\numberwithin{equation}{section}
\def\bL{\mathbb{L}}
\def\C{\mathbb{C}}
\def\Z{\mathbb{Z}}
\def\Q{\mathbb{Q}}
\def\bR{{\mathbf R}}
\def\bL{{\mathbf L}}
\def\pre-tr{\operatorname{pre-tr}}
\def\Hom{\operatorname{Hom}}
\newcommand{\Hoch}{\operatorname{Hoch}}
\newcommand{\charact}{\mathrm char}
\newcommand{\cF}{{\mathcal F}}
\newcommand{\cG}{{\mathcal G}}
\newcommand{\cO}{{\mathcal O}}
\newcommand{\cD}{{\mathcal D}}
\newcommand{\cV}{{\mathcal V}}
\newcommand{\cU}{{\mathcal U}}
\newcommand{\cK}{{\mathcal K}}
\newcommand{\veps}{\varepsilon}
\newcommand{\un}{\underline}
\newcommand{\Com}{\operatorname{Com}}
\newcommand{\Perf}{\operatorname{Perf}}
\newcommand{\supp}{\operatorname{Supp}}
\newcommand{\coker}{\operatorname{Coker}}
\newcommand{\Id}{\operatorname{Id}}
\newcommand{\colim}{\operatorname{colim}}
\newcommand{\Pre}{\operatorname{Pre}}
\newcommand{\Spec}{\operatorname{Spec}\,}
\newcommand{\Spf}{\operatorname{Spf}\,}
\newcommand{\Ho}{\operatorname{Ho}}
\newcommand{\id}{\operatorname{id}}
\title[Cyclic homology of categories of matrix factorizations]
{Cyclic homology of categories of matrix factorizations}
\author{Alexander I. Efimov}
\address{Steklov Mathematical Institute of RAS, Gubkin str. 8, Moscow 119991, Russia}
\email{efimov@mccme.ru}
\thanks{MSC: 14F05,  32S30}
\thanks{This work is supported by the RSF under a grant 14-50-00005.}
\begin{document}

\begin{abstract}In this paper, we will show that for a smooth quasi-projective variety over $\C,$ and a regular function $W:X\to \C,$ the periodic cyclic homology of the DG category of matrix factorizations $MF(X,W)$ is identified (unde Riemann-Hilbert correspondence) with vanishing cohomology $H^{\bullet}(X^{an},\phi_W\C_X),$ with monodromy twisted by sign.
Also, Hochschild homology is identified respectively with the hypercohomology of $(\Omega_X^{\bullet},dW\wedge).$

One can show that the image of the Chern character is contained in the subspace of Hodge classes. One can formulate the Hodge conjecture stating that it is surjective ($\otimes\Q$) onto Hodge classes. For $W=0$ and $X$ smooth projective this is precisely the classical Hodge conjecture.\end{abstract}

\keywords{}

\maketitle

\tableofcontents

\section{Introduction}

It is classically known that Hochschild homology is a non-commutative analog of differential forms, and in characteristic zero Connes differential corresponds to de Rham differential \cite{HKR}, \cite{FT}, \cite{C}. The de Rham cohomology hence corresponds to periodic cyclic homology.

An identification of the de Rham cohomology with periodic cyclic homology for affine smooth schemes was shown by Feigin and Tsygan \cite{FT}, and they also proved that for non-smooth affine schemes the periodic cyclic homology is identified with crystalline cohomology (which in the case of complex numbers is just the singular cohomology of the associated topological space).

Weibel \cite{W2} (see also Weibel and Geller \cite{WG}) has shown that if one defines the cyclic homology of schemes of finite type by sheafifying the cyclic complex and then taking $\bR\Gamma,$ one still gets
the crystalline cohomology.

Keller \cite{Ke2} proved that this version of cyclic homology actually coincides with the cyclic homology of the DG category of perfect complexes on a scheme.
Therefore, the periodic cyclic homology of $\Perf(X)$ is identified with the crystalline cohomology.

In this paper we compute periodic cyclic homology for matrix factorization categories. More precisely, following \cite{KKP}, we consider periodic cyclic homology of a $\Z/2$-graded DG category as a $\Z/2$-graded
vector bundle with connection on the formal punctured disk. We show that given a quasi-projective smooth complex algebraic variety
with a regular function, the periodic cyclic homology is identified under Riemann-Hilbert correspondence with cohomology of the sheaf of vanishing cycles
with the monodromy twisted by a sign. This can be considered as a generalization of a computation of the periodic cyclic homology of perfect complexes.

For a scheme $X$ of finite type over a field $k,$ together with a regular function $W\in\cO(X),$ we denote by $MF_{coh}(X,W)$ the $\Z/2$-graded DG category
of matrix factorizations, defined in \cite{Pos} in a more general context of quasi-coherent sheaves of curved DG algebras.

Further, for any small $\Z/2$-graded DG category $T$ over $k$ we recall the Hochschild homology
$$HH_{\bullet}(T):=H^{-\bullet}(\Id_T\stackrel{\bL}{\otimes}_{T\otimes T^{op}}\Id_T),$$
where $\Id_T$ is the diagonal DG bimodule. It can be computed by the standard Hochschild complex $(\Hoch(T),b),$ obtained from the bar resolution of one of the copies of $\Id_T.$ There is a well-known Connes differential $B$ on $\Hoch(T),$ satisfying $$Bb+bB=0.$$ It allows one to define the periodic cyclic homology
$$HP_{\bullet}(T):=H^{\bullet}(\Hoch((u)),b+uB),$$ as well as ordinary cyclic homology $HC_{\bullet}(T)$ and negative cyclic homology $HC^{-}_{\bullet}(T),$
which we will not consider.

Periodic cyclic homology $HP_{\bullet}(T)$ can be viewed as a vector bundle over the formal punctured disk $\Spf k((u)),$ with a natural connection $\nabla_u^{GM}.$
introduced in \cite{KKP} and written down explicitly in \cite{S}.

Given a finite-dimensional complex vector space $V$ with an automorphism $T:V\to V,$ choose any $M:V\to V$ such that
$$\exp(-2\pi iM)=T,$$
and write
$$\widehat{RH}^{-1}(E,T):=(V((u)),d+M\frac{du}u).$$
This is a bundle with connection on $\Spf \C((u)).$

Our main result is the following:

\begin{theo}\label{intro:main_complex}Let $X$ be a smooth quasi-projective algebraic variety over $\C,$ and $W:X\to\C$ a regular function. Then we have an identification of $\Z/2$-graded vector bundles with connection on the formal punctured disk:
$$(HP_{\bullet}(MF_{coh}(X,W)),\nabla_u^{GM})\cong \widehat{RH}^{-1}(H^{\bullet-1}_{an}(W^{-1}(0),\phi_W\C_X),T\cdot (-1)^{\bullet}),$$
where $T$ is the monodromy automorphism.\end{theo}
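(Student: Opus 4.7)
The plan is to reduce the non-commutative left-hand side to the commutative right-hand side via two comparison theorems, and then apply a Riemann--Hilbert identification on the commutative side.

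\textbf{Step 1 (Hochschild homology).} First I would sheafify the Hochschild complex of $MF_{coh}(X,W)$ over $X$ and establish, on each smooth affine patch $U\subset X$, a curved Hochschild--Kostant--Rosenberg quasi-isomorphism with $(\Omega^\bullet_U, dW\wedge)$. Concretely one resolves the diagonal curved bimodule by a Koszul-type complex over the Koszul dual of the curved algebra, computes the bar complex in terms of polyvector fields, and uses contraction by $dW$ to produce the twisted differential. Taking $\bR\Gamma(X,-)$ yields
\begin{equation*}
HH_\bullet(MF_{coh}(X,W)) \;\cong\; \bH^\bullet\bigl(X, (\Omega_X^\bullet, dW\wedge)\bigr),
\end{equation*}
and a direct inspection on the bar complex identifies Connes' operator $B$ with the de Rham differential $d$ under this equivalence.

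\textbf{Step 2 (Periodic cyclic homology and degeneration).} With $B$ identified with $d$, the natural candidate model for $HP_\bullet$ is the twisted de Rham hypercohomology $\bH^\bullet\bigl(X, (\Omega_X^\bullet((u)), ud+dW\wedge)\bigr)$, equipped with its Gauss--Manin connection. To conclude that this model really computes $HP_\bullet$, and not merely a subquotient, one needs a Hodge--to--de Rham degeneration statement saying that the $u$-adic spectral sequence collapses. Since $MF_{coh}(X,W)$ is typically not proper when $X$ is not proper, Kaledin's non-commutative degeneration theorem does not apply directly; I would instead deduce degeneration from Sabbah's theorem on the twisted Hodge filtration associated with $W$, after a \v{C}ech/gluing argument that reduces to an affine local computation, or alternatively by choosing a smooth compactification of $X$ and comparing via a localisation sequence among matrix factorisation categories.

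\textbf{Step 3 (Vanishing cohomology and the sign twist).} Finally I would identify the twisted de Rham hypercohomology, as a $\Z/2$-graded bundle with connection on $\Spf \C((u))$, with the prescribed $\widehat{RH}^{-1}$ image of $\bigl(H^{\bullet-1}_{an}(W^{-1}(0), \phi_W\C_X),\, T\cdot(-1)^\bullet\bigr)$. This rests on the classical result of Sabbah and Barannikov--Kontsevich: the formal Fourier transform of the Gauss--Manin $\cD$-module of $W$ has vanishing cohomology as its nearby cycles at $u=0$, with monodromy $T$, and matches the irregular connection $\nabla^{GM}_u$ through $\widehat{RH}^{-1}$. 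The main obstacles I expect are \emph{(a)} the degeneration statement in the non-proper matrix factorization setting, which likely forces a compactification or excision argument, and \emph{(b)} the parity twist $(-1)^\bullet$ of the monodromy, which arises from the $\Z/2$-graded sign conventions in the cyclic bar construction and must be tracked through all three steps; the cleanest verification of this sign is a direct computation in the local model where $X=\bbA^n$ and $W$ is a non-degenerate quadratic form, where both sides collapse to one-dimensional expressions whose monodromy sign can be read off explicitly.
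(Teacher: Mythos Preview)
Your overall architecture (local curved HKR, sheafification, then Sabbah) coincides with the paper's. The substantive divergence is in Step~2.

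\textbf{Degeneration is not needed.} The paper never invokes Hodge--to--de~Rham degeneration. Instead it works throughout at the level of \emph{mixed complexes with $u$-connection}: one constructs an explicit zig-zag of morphisms of mixed complexes between $\Hoch(MF_{coh}(X,W))$ and $\bR\Gamma(X,(\Omega_X^\bullet,-dW,d))$, each of which is a quasi-isomorphism for the $b$-differential. The point you are missing is that a $b$-quasi-isomorphism of mixed complexes automatically induces an isomorphism on $H^\bullet(-((u)),b+uB)$, by the obvious $u$-adic filtration; no collapse of the spectral sequence is required. Your concern that the twisted de~Rham model might compute only ``a subquotient'' of $HP_\bullet$ arises only if you pass to $b$-cohomology first and then try to impose $B$; if you stay at the chain level the problem evaporates. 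So your ``main obstacle (a)'' is an artifact of the order of operations.

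\textbf{The technical input in Step~1.} What makes the local chain of quasi-isomorphisms work is not a Koszul-duality resolution of the diagonal but rather (i)~the Polishchuk--Positselski comparison between the ordinary Hochschild complex of the DG category $MF^{nv}(U,W)$ and the Hochschild complex \emph{of the second kind} $\Hoch^{\Pi}$ of the same category, which requires $\mathrm{crit}(W)\subset W^{-1}(0)$; (ii)~pseudo-equivalences relating $\Hoch^{\Pi}$ of $MF^{nv}$ to $\Hoch^{\Pi}$ of the CDG algebra $(\cO(U),W)$; and (iii)~the HKR map from $\Hoch^{\Pi}(\cO(U),W)$ to $(\Omega^\bullet_U,-dW,d)$. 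Your description gestures at this but blurs the distinction between $\Hoch$ and $\Hoch^{\Pi}$, which is where the actual work lies.

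\textbf{The sign $(-1)^\bullet$.} This does not come from bar-complex sign conventions. The Gauss--Manin $u$-connection on $\Hoch$ carries a term $\Gamma/u$ with $\Gamma$ acting by $-n/2$ on length-$n$ chains; under HKR this becomes $\Gamma|_{\Omega^p}=-\tfrac{p}{2}\cdot\id$. Sabbah's theorem is stated for the connection $\tfrac{d}{du}+\tfrac{W}{u^2}$ without the $\Gamma/u$ term; adding $\Gamma/u$ multiplies the monodromy on $\Omega^p$ by $\exp(2\pi i\cdot p/2)=(-1)^p$. That is the entire origin of the twist, and it is read off directly rather than checked on a quadratic local model.
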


There is an algebraic formula for the vanishing cohomology with monodromy, conjectured by Kontsevich and proved by Sabbah \cite{Sab} (see also the paper by Sabbah and M. Saito \cite{SS}). We formulate its special case needed for our purposes.

\begin{theo}\label{th:Sabbah}(\cite{Sab}) Let $X$ be a smooth quasi-projective algebraic variety over $\C,$ and $W:X\to\C$ a regular function.
Assume that the critical locus of $W$ is contained in $W^{-1}(0).$ Then we have an isomorphism of $\Z$-graded bundles with connections on the formal punctured disk:
$$(H^{\bullet}_{Zar}(X,(\Omega_X^{\bullet}((u)),-dW+ud)),\nabla_u=\frac{d}{du}+\frac{W}{u^2})\cong \widehat{RH}^{-1}(H^{\bullet-1}_{an}(W^{-1}(0),\phi_W\C_X),T).$$\end{theo}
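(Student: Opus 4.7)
The plan is to identify both sides of the asserted isomorphism with a common object built from the Gauss-Manin $\cD$-module associated to $W: X \to \A^1$. First I would choose a smooth projective compactification $\bar{X} \supset X$ such that $\bar{X}\setminus X$ together with the pole divisor of the meromorphic extension of $W$ forms a simple normal crossing divisor. This allows one to replace $\Omega^{\bullet}_X$ by the corresponding logarithmic de Rham complex and to work with proper pushforwards. The hypothesis $\Crit(W)\subset W^{-1}(0)$ guarantees that all exponential phase contributions are concentrated at the single critical value $0$.

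Next I would reinterpret the left-hand side via Fourier-Laplace. Let $\cM^{\bullet}$ denote the algebraic Gauss-Manin system $\bR W_{*}(\Omega^{\bullet}_{X/\A^1})$, regarded as a holonomic $\cD$-module on $\A^1_t$. Its Fourier-Laplace transform is a holonomic $\cD$-module on the dual line $\A^1_\tau$; taking the formal germ at $\tau = \infty$ via the change of variable $u = 1/\tau$ yields a meromorphic connection on $\Spf \C((u))$. A direct computation shows that this formal germ agrees with the $u$-adic completion of $H^{\bullet}_{Zar}(X,(\Omega^{\bullet}_X((u)),-dW+ud))$ equipped with the connection $\nabla_u = d/du + W/u^2$. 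The key observation is that $-dW\wedge + ud$ is conjugate, via the formal factor $e^{-W/u}$, to $u$ times the usual de Rham differential, and the connection in $u$ picks up the $W/u^2$ term from differentiating this conjugation factor.

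Third, I would apply the formal stationary phase formula (Sabbah-Malgrange) to decompose the formal germ of the Fourier-Laplace transform at $\tau = \infty$ as a direct sum indexed by the critical values of $W$. Under our hypothesis only the value $0$ contributes, and the corresponding summand is identified, via formal microlocal analysis, with the formal completion of $H^{\bullet-1}_{an}(W^{-1}(0),\phi_W\C_X)$ carrying its natural monodromy $T$. The shift by $-1$ in cohomological degree is the standard convention coming from the distinguished triangle defining $\phi_W$ relative to $\psi_W$ and the nearby/vanishing cycles formalism.

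The main obstacle will be Step 3: establishing the precise microlocal identification between the formal germ of the Fourier transform at $\infty$ and the formal completion of the vanishing cycles cohomology. This requires the full machinery of $V$-filtrations (Malgrange-Kashiwara) together with the local analysis of irregular singularities, and is the heart of Sabbah's theorem. A careful tracking of cohomological shifts and of the exact form of the monodromy is also needed to match the right-hand side through $\widehat{RH}^{-1}$; notably, unlike in Theorem~\ref{intro:main_complex}, no additional sign twist on $T$ appears here, and verifying this requires keeping track of signs throughout the Fourier-Laplace identification.
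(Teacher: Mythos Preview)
The paper does not prove this statement; Theorem~\ref{th:Sabbah} is quoted from \cite{Sab} (and \cite{SS}) as a black box, with no argument given beyond the citation. So there is no ``paper's own proof'' to compare against.

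That said, your sketch is a reasonable outline of Sabbah's actual argument: the identification of the left-hand side with the formal germ at infinity of the Fourier--Laplace transform of the Gauss--Manin system, followed by the formal stationary phase decomposition and the microlocal comparison with vanishing cycles via the $V$-filtration. You have correctly identified Step~3 as the substantive part. One point to watch: the reduction in your first paragraph to a compactification with normal-crossing boundary and the passage to logarithmic complexes is not a trivial preliminary --- it is itself one of the technical ingredients in \cite{Sab} and \cite{SS}, needed to control the behaviour at infinity and to ensure the Fourier--Laplace transform has the expected local structure. Also, the conjugation $e^{-W/u}$ you invoke is only formal (the exponential is not in $\cO_X((u))$), so that heuristic needs to be replaced by the honest $\cD$-module manipulation when you write this up.
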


Our Theorem \ref{intro:main_complex} is actually obtained as a combination of Sabbah Theorem and the following result valid over arbitrary field of characteristic zero.

\begin{theo}\label{intro:main_algebraic}Let $X$ be a smooth separated scheme of finite type over a field $k$ of characteristic zero, and $W\in\cO(X)$ a regular function. Assume that the critical locus of $W$ is contained in $W^{-1}(0).$ Then we have natural isomorphisms of $\Z/2$-graded bundles with connections
$$(HP_{\bullet}(MF_{coh}(X,W)),\nabla_u^{GM})\cong (H^{\bullet}(\Omega_X^{\bullet}((u)),-dW+ud),\nabla_u^{DR}=\frac{d}{du}+\frac{\Gamma}{u}+\frac{W}{u^2}),$$
where $\Gamma_{|\Omega^p}:=-\frac{p}2\cdot\id,$
and an isomorphism of $\Z/2$-graded vector spaces $$HH_{\bullet}(MF_{coh}(X,W))\cong H^{\bullet}(X,(\Omega_X^{\bullet},-dW)).$$\end{theo}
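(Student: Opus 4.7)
The plan is to follow the strategy of Hochschild--Kostant--Rosenberg, adapted to the curved setting of matrix factorizations, and then upgrade to the periodic level by identifying Connes' differential $B$ with the de Rham differential and matching the Getzler--Gauss--Manin connection against the explicit formula on the right hand side.

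First I would reduce to the affine case by Zariski descent. The assignment $U \mapsto MF_{coh}(U, W|_U)$ forms a sheaf of $\Z/2$-graded DG categories on $X$, and Hochschild and periodic cyclic homology satisfy Zariski descent for smooth schemes (by results of Keller and Tabuada). On the right hand side, the hypercohomology $H^\bullet(X, \Omega_X^\bullet((u)), -dW+ud)$ is computed by the \v{C}ech complex with respect to the same cover. Matching these \v{C}ech spectral sequences term by term reduces the theorem to the case $X = \Spec R$ affine.

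In the affine case, I would resolve the diagonal bimodule over $MF(R,W) \otimes_k MF(R,-W)$ by a curved Koszul resolution whose underlying graded object is built from $\Omega_{R/k}^\bullet$, with Koszul differential contracting against $dW$; this is the matrix-factorization analogue of the Koszul resolution of $R$ over $R \otimes R^{op}$ entering classical HKR. Antisymmetrization in characteristic zero then yields a quasi-isomorphism from the Hochschild complex to $(\Omega_R^\bullet, -dW)$, proving the Hochschild statement. Following the Feigin--Tsygan computation adapted to this curved Koszul model, one then checks that Connes' differential $B$ corresponds to the de Rham differential $d$, so that the mixed complex $(\Hoch, b, B)$ matches $(\Omega_R^\bullet, -dW, d)$ and the periodic cyclic complex becomes $(\Omega_R^\bullet((u)), -dW+ud)$.

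The main obstacle is the final step: identifying $\nabla_u^{GM}$ with $\frac{d}{du} + \frac{\Gamma}{u} + \frac{W}{u^2}$. I would rely on Shklyarov's explicit cochain formula for the Gauss--Manin connection on the Hochschild complex of a curved $\Z/2$-graded DG category, plug in the Koszul/HKR model, and compute directly. The $W/u^2$ term arises from the curving of $MF(R,W)$, the $\Gamma/u$ term reflects the natural $\Z$-grading on $\Omega_R^\bullet$ together with the Euler rescaling in the Getzler formula, and the $d/du$ part is the tautological flat connection. The difficulty is that Shklyarov's formula involves higher bar components and contracting homotopies for $B$, so one must track these carefully at the chain level and verify compatibility with the HKR antisymmetrization before passing to cohomology.
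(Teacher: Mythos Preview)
Your overall architecture matches the paper's---reduce to affine, prove an HKR-type identification there, match the connections via Shklyarov's formulas---but the affine step is carried out quite differently. The paper does \emph{not} resolve the diagonal of $MF(R,W)$ by a curved Koszul complex. Instead it routes everything through the Hochschild complex \emph{of the second kind} $\Hoch^{\Pi}$: first the Polishchuk--Positselski comparison $\Hoch(MF^{nv})\to\Hoch^{\Pi}(MF^{nv})$ (a quasi-isomorphism precisely under the hypothesis that the critical locus lies in $W^{-1}(0)$), then a chain of pseudo-equivalences $(A,W)\text{-mod}^{cdg}_{fgp}\to(A,W)\text{-mod}^{qdg}_{fgp}\leftarrow(A,-W)$ collapsing the whole DG category of matrix factorizations down to the one-object CDG algebra $(A,W)$, and finally the classical HKR map $\Hoch^{\Pi}(A,W)\to(\Omega_A^\bullet,-dW,d)$. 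The payoff is that the connection check then happens on $\Hoch^{\Pi}(A,W)$, where the differential is zero and Shklyarov's formula reduces to the curvature term alone; the match with $\frac{d}{du}+\frac{\Gamma}{u}+\frac{W}{u^2}$ becomes a single explicit homotopy $H=-\frac{Wd}{2u}\veps$. Your Koszul-resolution route (\`a la Segal or C\u{a}ld\u{a}raru--Tu) is a legitimate alternative, but you remain inside the full DG category, so transporting the Gauss--Manin formula through the resolution and the antisymmetrization is genuinely more laborious than you suggest. For the globalization the paper also does more than invoke generic descent: it sets up a formalism of sheaves of mixed complexes with $u$-connections and truncated Godement resolutions, and the Mayer--Vietoris triangle for $\Hoch(MF_{coh})$ is obtained from Positselski's localization sequence $MF_{coh,Z}\to MF_{coh}(X)\to MF_{coh}(X\setminus Z)$ combined with Keller's exact-triangle theorem, rather than from an off-the-shelf sheaf property of $U\mapsto MF_{coh}(U,W)$.
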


Here by $dW$ one means the operator of multiplication by $dW.$ We remark that for Hochschild cohomology it was shown in \cite{LP} that
$$HH^{\bullet}(MF_{coh}(X,W))\cong H^{\bullet}(X,(\Lambda^{\bullet}T_X,\iota_{dW})),$$
again under assumption that the critical locus is in the fiber over zero.

Theorem \ref{intro:main_algebraic} is implied by a stronger result on mixed complexes with $u$-connections
(see Section \ref{sec:mixed_connections} for the precise definitions). Namely, a ($2$-periodic) mixed complex is just
 a $\Z/2$-graded vector space $C$ with a pair of anti-commuting differentials $(b,B).$ The morphism of such mixed complexes is defined in the obvious way, and it is said to be a quasi-isomorphism if it induces isomorphisms on $b$-cohomology. A $u$-connection on a mixed complex is defined in the natural way so that it induces
 a connection on $$H^{\bullet}(C((u)),b+uB).$$ There is a natural notion of weak and strict morphisms of mixed complexes with $u$-connections. Also,
 for a sheaf of mixed complexes with $u$-connections on a scheme $X$ of finite type over a field, there is a functor $\bR\Gamma(X,-)$ with values in mixed complexes with $u$-connections.

  We have the following result.

\begin{theo}\label{intro:main_mixed}Assume that the assumptions of Theorem \ref{intro:main_algebraic} are satisfied. Then
we have a chain of quasi-isomorphisms between mixed complexes with $u$-connections:
$$(\Hoch(MF_{coh}(X,W)),b,B,\nabla_u^{GM}),\quad \bR\Gamma(X,(\Omega_X^{\bullet},-dW,d,\nabla_u^{DR})).$$\end{theo}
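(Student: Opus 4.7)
The plan is to prove Theorem \ref{intro:main_mixed} by a local-to-global argument modeled on Keller's sheafification approach to cyclic homology of perfect complexes, combined with a curved Hochschild-Kostant-Rosenberg (HKR) computation. First, I would sheafify the Hochschild mixed complex: associate to each affine open $U\subset X$ the mixed complex with $u$-connection $(\Hoch(MF_{coh}(U,W_{|U})),b,B,\nabla_u^{GM})$ and organize these into a presheaf $\cH_{MF}$ on $X_{\mathrm{Zar}}$. The key input here is Zariski descent for matrix factorization categories (the assignment $U\mapsto MF_{coh}(U,W_{|U})$ is a sheaf of $\Z/2$-graded DG categories in an appropriate homotopical sense) together with the fact that the Hochschild complex, the $B$-operator, and the Getzler connection commute with the relevant homotopy limits up to natural quasi-isomorphism. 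This gives the first link in the chain: $\Hoch(MF_{coh}(X,W))\simeq \bR\Gamma(X,\cH_{MF})$ as mixed complexes with $u$-connection.

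Next, for affine $U=\Spec A$ with $W\in A$, I would construct an explicit curved HKR quasi-isomorphism from $(\Hoch(MF_{coh}(A,W)),b)$ to $(\Omega_A^{\bullet},-dW)$, given by the classical antisymmetrization formula
$$a_0[a_1|\cdots|a_n]\longmapsto \tfrac{1}{n!}\,a_0\,da_1\wedge\cdots\wedge da_n,$$
checking that the twist $-dW$ on the target side is precisely forced by the curvature $W$ of the curved DG structure underlying $MF_{coh}(A,W)$. Smoothness of $A$ enters exactly as in the classical HKR theorem. One then verifies, following Loday's argument adapted to the curved $\Z/2$-graded setting, that Connes' operator $B$ transports under this map to the de Rham differential $d$, upgrading the map to a quasi-isomorphism of mixed complexes.

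The heart of the proof is then matching the connections. The Getzler-Gauss-Manin connection $\nabla_u^{GM}$ on $\Hoch(MF_{coh}(A,W))((u))$ is given by an explicit cocycle formula (see \cite{S}, \cite{KKP}) whose leading terms are curvature insertions and Euler-type rescalings. I would plug the curved DG structure of $MF_{coh}(A,W)$ into this formula and show that, after transport through the HKR map, the curvature insertion precisely produces the $W/u^2$ term, while the internal $u$-weights in the normalization of the HKR map (a factor $u^{p/2}$ attached to a $p$-form, say) yield the Euler contribution $\Gamma/u$ with $\Gamma_{|\Omega^p}=-p/2\cdot\id$; the $d/du$ is tautological. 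This is the main obstacle: signs, the $\Z/2$-regrading, and the normalizations in Getzler's formula must be tracked with care, and one must verify that the cocycle makes sense on the curved side without convergence issues.

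Finally, I would globalize by Čech-sheafifying the HKR map. Both $\cH_{MF}$ and the sheafified de Rham side $(\Omega_X^{\bullet},-dW,d,\nabla_u^{DR})$ have Zariski-local models that are connected by the HKR quasi-isomorphism of the previous two steps in a manner compatible with $b$, $B$, and the $u$-connection, so applying $\bR\Gamma(X,-)$ yields the desired chain of quasi-isomorphisms of mixed complexes with $u$-connection. The intermediate object in the chain is precisely the Čech presheaf of local HKR complexes, which is why the theorem is stated as a zig-zag rather than a single direct quasi-isomorphism.
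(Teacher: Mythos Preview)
Your overall architecture (sheafify the Hochschild mixed complex, reduce to affines, apply a curved HKR map, globalize via $\bR\Gamma$) matches the paper's strategy, and your step~1 is essentially Proposition~\ref{prop:map_to_RGamma}, proved in the paper by Keller's localization triangle and Mayer--Vietoris. But there is a genuine gap in your affine step, and it is exactly where the paper's real work sits.

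The HKR map $a_0[a_1|\cdots|a_n]\mapsto \tfrac{1}{n!}a_0\,da_1\wedge\cdots\wedge da_n$ is a map out of the Hochschild complex \emph{of the second kind} of the CDG \emph{algebra} $(A,W)$, not out of $\Hoch(MF_{coh}(A,W))$. The latter is the Hochschild complex of a genuine (uncurved) $\Z/2$-graded DG category whose morphisms are maps between $\Z/2$-graded projective $A$-modules; your formula does not even type-check there, and there is no ``curved DG structure underlying $MF_{coh}(A,W)$'' in the sense your argument needs: the curvatures of that DG category are all zero. What you are missing is the bridge from $\Hoch(MF_{coh}(A,W))$ to $\Hoch^{\Pi}(A,W)$. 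In the paper this bridge is a zig-zag: first replace $MF_{coh}(\Spec A,W)$ by $MF^{nv}=(A,W)\text{-mod}^{cdg}_{fgp}$ (Proposition~\ref{prop:equiv_for_affine}); then pass from $\Hoch$ to $\Hoch^{\Pi}$ on this DG category (Proposition~\ref{prop:Pol-Pos}, the Polishchuk--Positselski comparison, which is precisely where the hypothesis that the critical locus lies in $W^{-1}(0)$ is used and without which the statement is false); then use the pseudo-equivalences of Proposition~\ref{prop:cdg_qdg_ps_eq} to descend from $\Hoch^{\Pi}$ of the module category to $\Hoch^{\Pi}(A,-W)$; then pass to $\Hoch^{\Pi}(A,W)$ via the opposite-algebra comparison (Proposition~\ref{prop:opposite}). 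Only after all this does the HKR map apply.

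Consequently, what you call ``the heart of the proof'' (matching $\nabla_u^{GM}$ with $\nabla_u^{DR}$ under HKR) is in fact the part already done by Shklyarov on $\Hoch^{\Pi}(A,W)$ and simply quoted here (Proposition~\ref{prop:HKR_map}); it is moreover only a \emph{weak} compatibility, with an explicit homotopy. The substantive content you have skipped is the $\Hoch\to\Hoch^{\Pi}$ comparison and the reduction from the module DG category to the one-object CDG algebra via pseudo-equivalences. Your proposal also never invokes the critical-locus hypothesis, which is a symptom of the gap: that hypothesis enters exactly at the Polishchuk--Positselski step you omit.
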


The special case of this result for an affine space with a polynomial with isolated singularity has already been shown by Shklyarov \cite{S}.

The paper is organized as follows.

In Section \ref{sec:CDG} we recall the DG categories and curved DG categories over a field, and define the DG categories of matrix factorizations.

Section \ref{sec:mixed_connections} is devoted to mixed complexes with $u$-connections, already mentioned above. First, in Subsection \ref{subsec:mixed}
we introduce mixed complexes, morphisms and quasi-isomorphisms between them. Also, we give examples of Hochschild complexes (of the second kind)
of small (curved) DG categories and twisted de Rham mixed complex. Then, in Subsection \ref{subsec:connections} we introduce $u$-connections of mixed complexes, and also the notions of weak and strict morphisms between mixed complexes with $u$-connections. Next, in Subsection \ref{subsec:ex_of_conn}
we recall the $u$-connections on the Hochschild complexes (of the second kind) and for twisted de Rham mixed complex. For Hochschild complexes of the second kind it was not written explicitly before, but it is obtained naturally e.g. by looking at the general formula for $A_{\infty}$-algebras, and adding a term corresponding to the $m_0,$ which corresponds to the curvature. In Subsection \ref{subsec:sheaves_of_mixed_with_conn} we study (pre)sheaves of mixed complexes
with $u$-connections on a scheme of finite type over a field. Such (pre)sheaves arise for example from presheaves of (curved) DG categories. Here we construct in particular the derived global section functor with values in mixed complexes with $u$-connections, which preserves weak quasi-isomorphisms.

In Section \ref{sec:aff_case} we prove Theorem \ref{intro:main_mixed} in the affine case: $X=\Spec A.$ It uses the results of Polishchuk and Positselski on Hochschild complexes of the second kind, and the
Hochschild-Kostant-Rosenberg map \cite{HKR} from Hochschild complex of the second kind of the CDG algebra $(A,W)$ to the twisted de Rham mixed complex.
The special case when $A$ is the polynomial ring, and $W$ has isolated singularity at the origin, was done by Shklyarov \cite{S}.

In Section \ref{sec:gen_case} we prove the general case of Theorem \ref{intro:main_mixed}, using sheafification argument. Actually, the idea of sheafification for Hochschild complexes of DG categories is due to Keller \cite{Ke2}, who used this for perfect complexes. Our argument is very similar to Keller's one,
although the chain of quasi-isomorphisms occures to be much longer. We also show here that Theorem \ref{intro:main_mixed} implies Theorem \ref{intro:main_algebraic}, which in turn implies Theorem \ref{intro:main_complex}.

In Section \ref{sec:concluding} we write down some remarks.

\smallskip

{\noindent{\bf Acknowledgements.}} I am grateful to L. Positselski and V. Vologodsky for useful discussions.

\section{(Curved) DG categories}
\label{sec:CDG}

In this section we recall the notion of (curved) DG categories and (curved) DG functors, and also define DG categories of matrix factorizations.
We fix some basic field $k.$

\subsection{DG categories}
\label{subsec:DGcat}

Our basic reference for DG categories is \cite{Ke1}. The DG quotients were introduced in \cite{Ke4}, \cite{Dr}, and we give the explicit construction from \cite{Dr}.

\begin{defi}1) A $\Z/2$-graded DG category is a category, for which the sets of morphisms
$\Hom(X,Y)$ are $\Z/2$-graded complexes of vector spaces, the composition maps
$$\Hom(Y,Z)\otimes\Hom(X,Y)\to\Hom(X,Z)$$ are morphisms of complexes, and the identity morphisms are closed
of degree zero.

2) A DG functor $F:T\to T^\prime$ between DG categories is a functor such that
the maps $$F(X,Y):\Hom(X,Y)\to\Hom(F(X),F(Y))$$ are morphisms of complexes.\end{defi}

The basic example is the DG category $\Com k$ of complexes of $k$-vector spaces., with Hom's being Hom-complexes.

DG algebra is a DG category with one object. A morphism of DG algebras is a DG functor of the associated DG categories.

To any DG category $T$ we can associate a graded $k$-linear homotopy category $\Ho^{\bullet}(T),$ with $$\Hom_{\Ho^{\bullet}(T)}(X,Y):=H^{\bullet}(\Hom_T(X,Y)).$$

\begin{defi}A DG functor $F:T\to T^\prime$ is called a quasi-equivalence if the induced functor
$$\Ho^{\bullet}(F):\Ho^{\bullet}(T)\to \Ho^{\bullet}(T^\prime)$$
is an equivalence of graded categories.\end{defi}

Recall that the right DG module over a small DG category $T$ is a DG functor
$T^{op}\to \Com k.$ One gets the derived category $D(T)$ of such DG modules, obtained by localizing with respect to quasi-isomorphisms.

There is a more general notion of Morita equivalence.

\begin{defi}A DG functor $F:T\to T^\prime$ between small DG categories is said to be a Morita equivalence if the restriction functor
$$D(T^\prime)\to D(T)$$ is an equivalence.\end{defi}

We will need the notion of a DG quotient for DG categories.

\begin{defi}(\cite{Dr}) Let $S\subset T$ be a full DG subcategory of a small DG category $T.$ Define the Drinfeld DG quotient $T/S$ as follows. This is a DG category with $Ob(T/S)=Ob(T),$ and it is obtained from $T$ by formally adding odd morphisms $h(X):X\to X$ such that $$d(h(X))=\id_X.$$ That is, we have an identification of graded vector spaces
$$\Hom_{T/S}(X,Y)=\Hom_T(X,Y)\oplus\bigoplus\limits_{\substack{n\geq 1,\\Z_1,\dots,Z_n\in S}}\Hom_T(X,Z_1)\otimes\Hom_T(Z_1,Z_2)\otimes\dots\otimes\Hom_T(Z_n,Y)[n],$$
and the differential comes from differentials on $\Hom$'s in $T$ and from the condition $d(h(Z_i))=\id_{Z_i}.$\end{defi}

\subsection{Matrix factorizations}
\label{subsec:MF}

Now we define the DG category of matrix factorizations. Let $X$ be a separated scheme of finite type over $k,$ and
$W\in\cO(X)$ a regular function. Define the following $\Z/2$-graded DG categories.

First, the category $MF^{nv}(X,W)$ has as objects pairs $(E,\delta)$ of a $\Z/2$-graded locally free sheaf $E=E^0\oplus E^1$ on $X$ and an odd map $\delta:E\to E$ such that
$$\delta^2=W\cdot\id_E.$$ The complexes of morphisms are defined by taking standard $\Hom$-complexes (the differential squares to zero).

Second, the category $MF_{coh}^{nv}(X,W)$ is defined in the same way but the objects are pairs $(E,\delta),$ where $E$ is a
$\Z/2$-graded coherent sheaf on $X.$ In particular, we have a fully faithful DG functor $MF^{nv}(X,W)\to MF_{coh}^{nv}(X,W).$

Further, the full DG subcategory $MF_{coh}^{ex}(X,W)\subset MF_{coh}(X,W)$ is defined to consist of convolutions of exact triples of coherent matrix factorizations. Following \cite{Pos}, \cite{Or2}, \cite{PV}, \cite{LP}, we define the category of coherent matrix factorizations as a DG quotient
$$MF_{coh}(X,W):=MF_{coh}^{nv}(X,W)/MF_{coh}^{ex}.$$
In particular, we have a natural DG functor
\begin{equation}\label{eq:nv_to_quotient}MF^{nv}(X,W)\to MF_{coh}(X,W).\end{equation}

\begin{prop}\label{prop:equiv_for_affine} (\cite{Pos}) Let $X$ be affine and smooth over $k.$ Then the DG functor \eqref{eq:nv_to_quotient} is a quasi-equivalence.\end{prop}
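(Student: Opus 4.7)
The plan is to verify the two defining conditions of a quasi-equivalence: essential surjectivity of $H^0$ of the induced functor, and quasi-isomorphism at the level of Hom-complexes. Write $X = \Spec A$ with $A$ smooth of finite Krull dimension $N$.

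For essential surjectivity, given a coherent matrix factorization $(E,\delta) \in MF_{coh}^{nv}(X,W)$, I would construct a locally free matrix factorization $(P,\delta_P) \in MF^{nv}(X,W)$ together with a closed degree zero map $\phi: (P,\delta_P) \to (E,\delta)$ whose cone lies in $MF_{coh}^{ex}(X,W)$; in the Drinfeld quotient $\phi$ then becomes an isomorphism. To build $(P,\delta_P)$, I would take a finite-length projective resolution $P^\bullet \to E^0 \oplus E^1$ of the underlying $\Z/2$-graded $A$-module, of length at most $N$ by smoothness. Lift the odd operator $\delta$ along this resolution to an endomorphism $\tilde\delta$ of $P^\bullet$; the identity $\delta^2 = W\cdot\id$ lifts only up to a homotopy of the form $[d,h_1]$ with $d$ the resolution differential. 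Folding modulo $2$ produces a $\Z/2$-graded locally free sheaf $P$, and one iteratively corrects $\tilde\delta + d$ by further homotopies to eliminate the higher error terms; finiteness of the resolution guarantees that the procedure terminates in a genuine MF differential $\delta_P$ with $\delta_P^2 = W$. The cone of the natural projection $P \to E$ admits a finite filtration whose associated graded pieces are acyclic two-term complexes of locally free matrix factorizations, exhibiting it as a convolution of exact triples, hence as an object of $MF_{coh}^{ex}(X,W)$.

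For the Hom-complex statement, the explicit Drinfeld formula gives
$$\Hom_{MF_{coh}(X,W)}(P,P') = \Hom_{MF_{coh}^{nv}}(P,P') \oplus \bigoplus_{\substack{n\geq 1\\ Z_i \in MF_{coh}^{ex}}} \Hom(P,Z_1)\otimes\cdots\otimes\Hom(Z_n,P')[n],$$
so it suffices to show $\Hom(P,Z)$ is acyclic whenever $P$ is locally free and $Z \in MF_{coh}^{ex}(X,W)$. Since $P$ is projective over the affine $X$, the functor $\Hom(P,-)$ is exact on coherent matrix factorizations, sending any exact triple to an exact triple of Hom-complexes and therefore sending the convolution $Z$ of such a triple to a convolution of an exact triple of complexes, which is contractible. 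Each higher tensor summand in the Drinfeld formula then has an acyclic tensor factor and contributes nothing to cohomology, giving the desired quasi-isomorphism on Hom-complexes.

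The main obstacle is the iterative construction of $\delta_P$ in the essential-surjectivity step: the differential must square to $W$ exactly, not merely up to a higher homotopy. Managing this is what forces the use of smoothness and affineness of $X$, via a bound on the projective dimension of coherent sheaves, which ensures that the inductive correction terminates after finitely many stages and that the error cone can be written as a successive extension exhibiting membership in $MF_{coh}^{ex}$.
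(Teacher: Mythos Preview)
The paper states this proposition without proof, relying implicitly on the references \cite{Pos}, \cite{PP}, \cite{Or2} for the underlying theory; there is no argument in the paper itself to compare against. Your outline follows the standard route and is correct in substance.

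The full-faithfulness half is clean: projectivity of $P$ over the affine $X$ makes $\Hom(P,-)$ exact on underlying modules, so it sends an exact triple of coherent matrix factorizations to a short exact sequence of complexes, whence $\Hom(P,Z)$ is acyclic for every $Z\in MF_{coh}^{ex}(X,W)$; the increasing filtration of the Drinfeld Hom by bar-length then kills all higher summands exactly as you say.

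For essential surjectivity, the iterative correction producing $\delta_P$ with $\delta_P^2=W$ on a finite projective resolution is the standard twisted-complex construction, and termination via the bound on global dimension is the right mechanism. Your justification that $\Cone(P\to E)$ lies in $MF_{coh}^{ex}$ is imprecise, however: there is no filtration of the cone whose graded pieces are ``acyclic two-term complexes of locally free matrix factorizations,'' because the individual $P^i$ carry no matrix-factorization structure, and the components $d,\tilde\delta,h_1,\dots$ of $\delta_P$ shift resolution degree in opposite directions, so no natural filtration by resolution degree is $\delta_P$-stable with the graded pieces you describe. The clean fix is to argue by induction on projective dimension: the surjection $(P,\delta_P)\twoheadrightarrow(E,\delta)$ has as kernel a coherent matrix factorization $(K,\delta_K)$, and the resulting short exact sequence is itself an exact triple, so its totalization lies in $MF_{coh}^{ex}$ by definition, giving $E\cong\Cone(K\to P)$ in the quotient. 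Since $K$ has strictly smaller projective dimension than $E$, induction (together with the already-established full faithfulness, to lift the map $K'\to P$ back to $MF^{nv}$) places $E$ in the essential image. This is a rewriting of your argument rather than a genuine gap.
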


\begin{theo} (\cite{Or1},\cite{Pos}) Assume that $X$ is smooth, and $W$ is non-zero on each connected component. Then there is an equivalence of triangulated categories
$$\Ho(MF_{coh}(X,W))\cong D_{sg}(W^{-1}(0)),$$
where $D_{sg}(Y):=D^b_{coh}(Y)/\Perf(Y).$\end{theo}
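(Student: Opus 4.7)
The plan is to exhibit a natural functor $\Psi:\Ho(MF_{coh}(X,W))\to D_{sg}(Y)$, where $Y:=W^{-1}(0)$ and $i:Y\hookrightarrow X$, and verify it is an equivalence by establishing essential surjectivity and fully faithfulness separately.

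I would construct $\Psi$ in two steps. For a coherent matrix factorization $(E,\delta)$, the identity $\delta^2=W\cdot\id$ forces $i^*\delta$ to square to zero, so $i^*E$ is a $2$-periodic complex of coherent $\cO_Y$-modules. Define $\Psi_0(E,\delta):=\coker(i^*\delta|_{E^1\to E^0})\in D^b_{coh}(Y)$, and compose with the localization to $D_{sg}(Y)$. Convolutions of exact triples of matrix factorizations restrict to acyclic complexes on $Y$, so their images are perfect, hence zero in $D_{sg}(Y)$. Combined with the fact that contracting homotopies $h$ with $d(h)=\id$ become genuine identities in the triangulated target, the universal property of Drinfeld's quotient produces the factored functor $\Psi$.

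For fully faithfulness, the key computation is that, after reducing to the $\cO_Y$-side via the cokernel construction, the periodic structure of $(i^*E,i^*\delta)$ provides a complete resolution (in Buchweitz's sense) of $\coker(i^*\delta)$; morphisms in $MF_{coh}^{nv}$ compute the stabilized $\Ext$ between the two cokernels, and passage to the Drinfeld quotient kills exactly the direction removed by stabilization (the perfect summand). Smoothness of $X$ bounds the global homological dimension of $\cO_Y$-modules of the relevant kind and makes this identification work. For essential surjectivity, any object of $D_{sg}(Y)$ is, up to shift, a coherent sheaf $\cF$ on $Y$; pushing forward along $i$ and using smoothness of $X$ to pick a finite locally free resolution $P^\bullet\to i_*\cF$ of length $\leq\dim X$, one produces a matrix factorization $\bar E$ by a standard $2$-periodic folding of a high-syzygy truncation of $P^\bullet$, using the null-homotopy for multiplication by $W$ that exists on $i_*\cF$, and checks that $\Psi(\bar E)\cong\cF$ in $D_{sg}(Y)$.

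The main obstacle will be making the essential surjectivity argument work globally when $X$ is only quasi-projective, not affine. In the affine case this is Orlov's original argument, resting on Eisenbud's hypersurface matrix factorization/MCM correspondence together with Proposition~\ref{prop:equiv_for_affine}, which allows one to replace $MF_{coh}$ by the naive variant $MF^{nv}$. For general quasi-projective smooth $X$, one has to replace direct syzygy manipulation with Positselski's machinery: $MF_{coh}(X,W)$ is shown to be a DG enhancement of the subcategory of compact objects in an appropriate coderived category of curved $\cO_X$-modules, whose compacts are in turn matched with $D_{sg}(Y)$ by a gluing argument. The delicate point is identifying the acyclic/perfect ideals correctly on both sides of this comparison, for which boundedness of global homological dimension (guaranteed by smoothness of $X$) is crucial.
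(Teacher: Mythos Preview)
The paper does not supply a proof of this theorem: it is stated with attribution to \cite{Or1} and \cite{Pos} and used as background. There is therefore no argument in the paper to compare your proposal against.

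Your outline is essentially the approach of the cited references. The cokernel functor $\Psi_0$ and the complete-resolution/syzygy argument are Orlov's original proof in the affine hypersurface setting (which, via Proposition~\ref{prop:equiv_for_affine}, lets one work with $MF^{nv}$), and you correctly identify that the passage to general smooth separated $X$ is where Positselski's coderived-category machinery enters. One small point: your $\Psi_0$ is formulated directly on $MF_{coh}^{nv}$, i.e.\ on matrix factorizations with $E$ merely coherent, but $i^*$ is not exact on coherent sheaves, so the claim that convolutions of exact triples restrict to acyclic complexes on $Y$ is not automatic in that generality. In the affine case this is harmless because one may replace $E$ by a locally free matrix factorization first; in the global case it is exactly the kind of issue absorbed by the coderived framework you invoke at the end. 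With that caveat, the sketch is sound and matches the literature the paper cites.
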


It can be easily seen in a number of ways that for any open subset $U\subset X$ containing $W^{-1}(0)$ the restriction functor
$$MF_{coh}(X,W)\to MF_{coh}(U,W)$$
is a quasi-equivalence. In particular, we may (and will if it is convenient) assume that the critical locus of $W$ is contained in $W^{-1}(0).$

\subsection{Curved DG categories}
\label{subsec:CDG}

Our basic reference for curved DG categories is \cite{PP}.

\begin{defi}A $\Z/2$-graded curved DG category (CDG category) is a category $\cD,$ for which the sets of morphisms are $\Z/2$-graded vector spaces equipped with odd maps $$d:\Hom(X,Y)\to \Hom(X,Y)$$
(the "differentials"), and for each object there is a distinguished even morphism $h_X:X\to X$ (a curvature). This data is required to satisfy the following properties:

1) We have $$d(fg)=(df)g+(-1)^{|f|}fdg$$ for any composable homogeneous morphisms $f,g;$

2) We have $$d^2(f)=h_Yf-fh_X$$ for any $f:X\to Y;$

3) We have $$dh_X=0$$ for any object $X;$

3) The identity morphisms $\id_X$ are of degree zero (this automatically implies $d\id_X=0$).\end{defi}

CDG algebra is a CDG category with one object.

In particular, any DG category can be treated as a CDG category with $h_X=0.$ Moreover, for any CDG category, its full sub(CDG)category consisting of objects with zero curvature is actually a DG category.

There is a natural notion of an opposite CDG category $\cD^{op}.$ Namely, $$Ob(\cD^{op})=OB(\cD),\quad\Hom_{C^{op}}(X^{op},Y^{op})=\Hom_C(Y,X),\quad d^{op}=d,\quad h_{X^{op}}=-h_X,$$
and the composition changes by a sign: $f^{op}g^{op}=(-1)^{|f||g|}(gf)^{op}.$

We have the following basic example: the CDG category $\Pre(k\text{-mod}).$ Its objects are $\Z/2$-graded vector spaces $X$ equipped with an odd map
$d_X:X\to X.$ Further, $\Hom((X,d_X),(Y,d_Y)).$ is the graded space of morphisms. It is equipped with a "differential"
$$d(f)=d_Yf-(-1)^{|f|}fd_X.$$ The composition is obvious, and the curvature of $(X,d_X)$ equals $d_X^2.$

We have a notion of a CDG functor

\begin{defi}\label{def:CDG}A curved DG functor $(F,\alpha):\cD\to\cD^\prime$ (CDG functor) between CDG categories is a functor together with distinguished odd morphisms $$\alpha_X:F(X)\to F(X)$$ such that

1) The maps $$F(X,Y):\Hom(X,Y)\to \Hom(F(X),F(Y))$$ are maps of graded vector spaces;

2) We have $$F(df)=d(F(f))+\alpha_YF(f)-(-1)^{|f|}F(f)\alpha_X.$$ for any homogeneous $f\in\Hom_{\cD}(X,Y);$

3) We have $$F(h_X)=h_{F(X)}+d\alpha_X+\alpha_X^2.$$\end{defi}

A morphism of CDG algebras is a CDG functor of the associated CDG categories.

It is easy to see that CDG functors from a small CDG category $\cD$ to any CDG category $\cD^\prime$ form a DG category: the space $\Hom((F,\alpha),(G,\beta))$ is the graded space of morphisms between graded functors, the differential is given by the formula $$d(f)_X=d(f_X)+\beta_Xf-(-1)^{|f|}f\alpha_X$$
for a homogeneous morphism $f:(F,\alpha)\to (G,\beta),$ and the composition is obvious.

We will call a CDG functor strict if the morphisms $\alpha_X$ equal to zero. In particular, any DG functor between DG categories can be treated as a strict CDG functor between associated CDG categories.

\begin{defi}A (left) CDG module over a CDG category $\cD$ is a strict CDG functor $F:\cD\to \Pre(k\text{-mod}).$ Left CDG-modules over $\cD$ form a
DG category $\cD-mod^{cdg}.$\end{defi}

We will also need a notion of a QDG functor.

\begin{defi}A QDG functor $F:\cD\to\cD^\prime$ between CDG categories is given by the same data as a CDG functor, but we do not require the condition 3) of Definition \ref{def:CDG} to hold (the relation with curvatures).

The QDG functors form a CDG category: the morphisms and the "differentials" are defined in the same way as for CDG functors, and the curvatures are given by the formula
$$(h_F)_X:=h_{F(X)}+d\alpha_X+\alpha_X^2-F(h_X).$$\end{defi}

It is clear that the DG category of CDG functors is a full CDG subcategory of the CDG category of QDG functors, consisting of QDG functors with zero curvature.

Again, we call a QDG functor {\it strict,} if $\alpha_X=0.$

\begin{defi}A (left) QDG module over a CDG category $\cD$ is a strict QDG functor $F:\cD\to \Pre(k\text{-mod}).$ Left QDG-modules over $\cD$ form a
CDG category $\cD-mod^{qdg}.$ In particular, we have $$\Pre(k\text{-mod})\cong k\text{-mod}^{qdg}.$$\end{defi}

There is no reasonable notion of a weak equivalence for CDG categories (like e.g. quasi-equivalences or Morita equivalences of DG categories). However, we will need a notion of a pseudo-equivalence \cite{PP}, which is quite useful for our purposes.

First, define the notion of a twist of an object. Let $X\in\cD$ be an object of a CDG category, and $\tau:X\to X$ an odd endomorphism. Then an object $Y\in\cD$
is called a twist of an object $X$ by $\tau$ if there exist even morphisms $i:X\to Y,$ $j:Y\to X$ such that
$$ij=\id_Y,\quad ji=\id_X,\quad jd(i)=\tau,\quad jh_Yi=h_X+d\tau+\tau^2.$$

The shift $X[1]$ of $X$ is defined as an object $Y$ equipped with odd morphisms $i:X\to Y,$ $j:Y\to X$ such that
$$ij=\id_Y,\quad ji=\id_X,\quad d(i)=0,\quad d(j)=0.$$

Further, given a family of objects $X_{\alpha}\in\cD,$ there direct sum is an object $X,$ which is equipped with a structure of a direct sum of objects
$X_{\alpha}$ in the graded category $\cD^{gr},$ and the structural even morphisms $i_{\alpha}:X_{\alpha}\to X$ satisfy $di_{\alpha}=0.$

\begin{defi}A CDG functor $F:\cD\to\cD^\prime$ is called a pseudo-equivalence if it is fully faithful (as a functor between graded categories) and each
object of $\cD^\prime$ can be obtained from the image of $F$ using direct sums, direct summands, shifts and twists.\end{defi}

Take any small CDG category $\cD.$ Then we have full (C)DG  subcategories
$$\cD\text{-mod}^{cdg}_{fgp}\subset \cD\text{-mod}^{cdg},\quad \cD\text{-mod}^{qdg}_{fgp}\subset \cD\text{-mod}^{qdg}$$
consisting of CDG (resp. QDG) modules which are finitely generated projective as graded $\cD$-modules.

Note that we have an Yoneda strict CDG functor
$$\cD^{op}\to \cD\text{-mod}^{qdg}_{fgp}.$$

\begin{prop}\label{prop:cdg_qdg_ps_eq}(\cite{PP})The strict CDG functors
$$\cD\text{-mod}^{cdg}_{fgp}\to \cD\text{-mod}^{qdg}_{fgp}\leftarrow \cD^{op}$$
are pseudo-equivalences.\end{prop}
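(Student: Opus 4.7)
The plan is to verify, for each of the two strict CDG functors separately, the two conditions in the definition of a pseudo-equivalence: (i) full faithfulness on the underlying graded categories, and (ii) generation of every object of the target by iterated use of direct sums, direct summands, shifts, and twists applied to the image.

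For the Yoneda functor $\cD^{op}\to\cD\text{-mod}^{qdg}_{fgp}$, condition (i) is an immediate consequence of the graded Yoneda lemma, which identifies $\Hom(h^X,h^Y)$ with $\Hom_\cD(Y,X)$ as graded vector spaces (note that the representables are only QDG, not CDG, in general, because one computes the curvature defect of $h^X$ at $Y$ to be the nonzero operator of right multiplication by $-h_X$). For condition (ii), I would take an arbitrary fgp QDG module $(M,d_M)$ and exploit the fact that the underlying graded $\cD$-module, being finitely generated projective, sits as a direct summand of a finite direct sum $N=\bigoplus_i h^{X_i}[n_i]$ of shifts of representables. The object $N$ carries a canonical QDG structure built by applying the direct-sum and shift constructions to the Yoneda image. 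Choose graded splittings $i\colon M\to N$ and $p\colon N\to M$ with $pi=\id_M$; conjugating $d_N$ through $i,p$ gives one QDG structure on $M$, and its difference with the prescribed $d_M$ is an odd graded endomorphism $\tau$ of $M$. Unwinding the definition of twist shows that $(M,d_M)$ is precisely the twist of the QDG direct summand cut out by $i,p$ by $\tau$, so $M$ is produced from representables using direct sums, shifts, one direct summand, and one twist.

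For the inclusion $\cD\text{-mod}^{cdg}_{fgp}\to\cD\text{-mod}^{qdg}_{fgp}$, condition (i) is immediate: on both sides the graded Hom-spaces coincide with the graded $\cD$-linear maps of the underlying graded modules. For condition (ii), the substantive content is that a general fgp QDG module has a curvature defect $c_M(X)=d_{M(X)}^2-M(h_X)$ which must be eliminated. Since a twist by $\tau$ modifies the curvature by $d\tau+\tau^2$, the problem reduces to a Maurer--Cartan-type equation: find (possibly after a controlled enlargement by direct sum and shift) an odd endomorphism $\tau$ with $d\tau+\tau^2=-c_M$. I would solve this by passing to a sufficiently large auxiliary module—for instance, a cone-type enlargement $M\oplus M[1]$ with an off-diagonal differential—whose underlying graded $\cD$-module admits an honest fgp CDG structure, and then recover $(M,d_M)$ as the twist of a direct summand of this CDG module.

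The main obstacle is precisely this last step: exhibiting every fgp QDG module as a twist of (a direct summand of a direct sum of shifts of) an fgp CDG module. The difficulty is algebraic: one must solve $d\tau+\tau^2=-c_M$ after suitable enlargement, which is exactly the kind of computation the twist formalism of Polishchuk--Positselski is designed to support. Once this reduction is carried out, the four operations of direct sum, direct summand, shift, and twist suffice to recover an arbitrary fgp QDG module from fgp CDG modules, completing the verification that both functors are pseudo-equivalences.
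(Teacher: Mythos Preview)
The paper gives no proof of this proposition; it is simply attributed to Polishchuk--Positselski \cite{PP}. So there is nothing in the paper itself to compare your argument against.

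Your sketch is nonetheless essentially the right one, with one step left unfinished. For the Yoneda functor the argument works, modulo the caveat that the graded idempotent cutting out $M$ inside $N=\bigoplus_i h^{X_i}[n_i]$ is typically not $d$-closed; one should first twist $N$ so that its differential becomes block-diagonal with respect to the splitting $M\oplus M^{\perp}$ (and has the prescribed $d_M$ on the $M$-block), after which $(M,d_M)$ is an honest direct summand with closed structure maps. In other words, the correct order of operations is ``twist, then direct summand''.

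For the inclusion $\cD\text{-mod}^{cdg}_{fgp}\hookrightarrow\cD\text{-mod}^{qdg}_{fgp}$ you correctly locate the crux but stop short of writing the construction down; as you say yourself, this is the main obstacle. Here is the missing step made explicit. Given a fgp QDG module $(M,d_M)$ with curvature $c_M=d_M^2-M(h_{-})$ (an even $\cD$-linear endomorphism, $d$-closed because $dh_X=0$ in $\cD$), set
\[
N \;=\; M\oplus M[1],\qquad d_N \;=\; \begin{pmatrix} d_M & \id\\ -c_M & -d_M\end{pmatrix}.
\]
A direct computation using $[d_M,c_M]=0$ gives $d_N^{\,2}=M(h_{-})\oplus M(h_{-})$, so $N$ is a genuine fgp \emph{CDG} module. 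Twisting $N$ by the odd $\cD$-linear endomorphism $\tau=\left(\begin{smallmatrix}0 & -\id\\ c_M & 0\end{smallmatrix}\right)$ makes the differential diagonal, equal to $d_M\oplus(-d_M)$; now the inclusion $m\mapsto(m,0)$ is closed and exhibits $(M,d_M)$ as a direct summand. This is precisely the ``cone-type enlargement $M\oplus M[1]$ with an off-diagonal differential'' you allude to, and once you supply this computation your outline becomes a complete proof.
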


For any commutative $k$-algebra $A$ of finite type, and any $W\in A,$ we have a CDG algebra $(A,W)$ concentrated in even degree, with zero "differential" and curvature $W.$ Then there is a natural identification of DG categories
$$(A,W)\text{-mod}_{fgp}^{cdg}\cong MF^{nv}(\Spec A,W).$$

\section{Mixed complexes with a connection}
\label{sec:mixed_connections}

We will consider $\Z/2$-graded complexes. Again, we fix some basic field $k.$

\subsection{Mixed complexes.}
\label{subsec:mixed}

\begin{defi}1) A mixed complex is a triple $(C,b,B),$ where $C$ is a $\Z/2$-graded vector space, $b$ and $B$ are odd differentials on $C$ satisfying
$$bB+Bb=0.$$

2) A morphism of mixed complexes
$$f:(C,b,B)\to (C^\prime,b^\prime,B^\prime)$$ is a morphism $f:C\to C^\prime$ of graded vector spaces (homogeneous of degree zero), such that
$$b^\prime f=fb,\quad B^\prime f=fB.$$

3) A morphism $f:(C,b,B)\to (C^\prime,b^\prime,B^\prime)$ is called a quasi-isomorphism if it induces a quasi-isomorphism of complexes
$$(C,b)\to (C^\prime,b^\prime).$$\end{defi}

It is convenient to think of mixed complexes as DG modules over $k\langle B\rangle/B^2,$ where $B$ is an odd variable.

We will consider the following examples of mixed complexes.

{\bf Example:} {\it Hochschild complexes.} Let $T$ be a small $\Z/2$-graded DG category over a field $k.$ Define the DG category $T^e$ by formally adding a (closed) identity morphism $e_X$ for each object $X\in T.$ That is,
$$Ob(T^e):=Ob(T),\quad \Hom_{T^e}(X,Y):=\begin{cases}\Hom_T(X,Y) & \text{if }X\ne Y;\\
\Hom_T(X,X)\oplus k\cdot e_X & \text{if }X=Y,\end{cases}$$
and the composition is obvious.

For a $\Z/2$-graded vector space $V,$ denote by $sV$ the same space with grading shifted by $1$ (reversed grading), and for a homogeneous element $v\in V$ denote by $sv$ the corresponding element of $sV.$
Now, put
\begin{multline*}\Hoch(T):=\bigoplus_{X\in Ob(T)}\Hom_T(X,X)\oplus\\ \bigoplus\limits_{\substack{n\geq 1,\\X_0,\dots,X_n\in Ob(T)}}\Hom_{T^e}(X_n,X_0)\otimes s\Hom_T(X_{n-1},X_n)\dots\otimes s\Hom_T(X_0,X_1).\end{multline*}
We will write $(f_n,f_{n-1},\dots,f_0)$ for $f_n\otimes sf_{n-1}\otimes\dots sf_0.$
The Hochschild differential $b$ is defined by the formula $$b=b_{m_2}+b_{m_1},$$ where
\begin{multline*}b_{m_2}(f_0\otimes f_1\otimes\dots f_n):=\sum\limits_{i=0}^{n-1}(-1)^{\sum\limits_{k=i+1}^n|sf_k|+1}(f_n,\dots,f_{i+1}f_i,\dots f_0)+\\
(-1)^{|sf_0|(|f_n|+\sum\limits_{k=1}^{n-1}|sf_k|)}(f_0f_n,f_{n-1},\dots,f_1),\end{multline*} and
$$b_{m_1}(f_n,f_{n-1},\dots,f_0)=\sum\limits_{i=0}^n (-1)^{\sum\limits_{k=i+1}^n|sf_k|} (f_n,\dots,df_i,\dots,f_0).$$ Further, the Connes differential $B$ is defined by the formula
$$B(f_n,f_{n-1},\dots,f_0):=\begin{cases}\sum\limits_{i=0}^n (-1)^{(\sum\limits_{k=0}^{i-1}|sf_k|)(\sum\limits_{l=i}^n|sf_l|)} (e_{X_i},f_{i-1},\dots f_0,f_n,\dots f_i) & \text{if }f_n\in\Hom_T(X_n,X_0);\\
0 & \text{if }f_n\in k\cdot e_{X_0}.\end{cases}$$

It is straightforward to check that $(\Hoch(T),b,B)$ is a mixed complex.

{\bf Example:} {\it Hochschild complexes of the second kind.} Suppose that $\cD$ is a $\Z/2$-graded curved small DG category (for instance, $\cD$ can be just a DG category, with zero curvatures). Again, we consider the CDG category $\cD^e$ defined in the same way as above. Define the graded vector space $\Hoch^{\Pi}(\cD)$ by the formula
\begin{multline*}\Hoch^{\Pi}(\cD):=\bigoplus_{X\in Ob(\cD)}\Hom_{\cD}(X,X)\oplus \\\prod\limits_{n\geq 1}(\bigoplus\limits_{X_0,\dots,X_n\in Ob(\cD)}\Hom_{\cD^e}(X_n,X_0)\otimes s\Hom_{l\cD}(X_{n-1},X_n)\otimes\dots\otimes s\Hom_{\cD}(X_0,X_1)).\end{multline*}

The Hochschild differential is defined by the formula $$b=b_{m_2}+b_{m_1}+b_{m_0},$$ where $b_{m_1}$ and $b_{m_2}$ are as above, and
$$b_{m_0}(f_n,f_{n-1},\dots f_0):=\sum\limits_{i=0}^{n}(-1)^{\sum\limits_{k=i}^n|sf_k|}(f_n,\dots, f_i,h_{X_i},f_{i-1},\dots,f_0).$$

The Connes differential $B:\Hoch^{\Pi}(\cD)\to \Hoch^{\Pi}(\cD)$ is defined by the same formula as above.

\begin{remark}One can show (by an easy spectral sequence argument) that if we would take direct sums instead of direct products in the definition of $\Hoch^{\Pi}(\cD),$ and at least one curvature $h_X$ is non-zero, then the resulting complex would be acyclic (see \cite{PP}).\end{remark}

{\bf Example:} {\it Twisted de Rham mixed complex.} Let $A$ be a commutative smooth $k$-algebra of finite type. Then $$(\Omega^{\bullet}(A),-dW\wedge,d_{DR})$$ is a mixed complex.

\subsection{$u$-connections on mixed complexes.}
\label{subsec:connections}

Take a formal even variable $u.$ Starting from a mixed complex $(C,b,B),$ we can form a new complex $(C((u)),b+uB),$ where
$$C((u))=\colim_n u^{-n}\cdot C[[u]],\quad C[[u]]=\prod\limits_{m\geq 0}u^m\cdot C.$$

\begin{prop}Any morphism of mixed complexes $f:(C,b,B)\to (C^\prime,b^\prime,B^\prime)$ induces a morphism of complexes
$$f((u)):(C((u)),b+uB)\to (C^\prime((u)),b^\prime+uB^\prime).$$ Moreover, if $f$ is a quasi-isomorphism, then so is $f((u)).$\end{prop}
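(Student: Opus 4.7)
My plan has three stages.

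The first assertion is a direct verification. I define $f((u))$ as the $k((u))$-linear extension of $f$, which makes sense because $u$ is central and even. Then, since $b+uB$ and $b'+uB'$ are $k((u))$-linear and the mixed-morphism identities give $b'f=fb$ and $B'f=fB$, one obtains $(b'+uB')\circ f((u))=f((u))\circ(b+uB)$ by summing the two relations after multiplying the second by $u$.

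For the quasi-isomorphism statement, I would reduce to an acyclicity claim via the mapping cone. The cone $\Cone(f)=C'\oplus C[1]$ of a morphism of mixed complexes carries a canonical mixed complex structure: the $b$-part is the usual cone differential built from $b$, $b'$, $f$, and the $B$-part is the block-diagonal $\mathrm{diag}(B',-B)$; the anti-commutator $b_{\Cone}B_{\Cone}+B_{\Cone}b_{\Cone}$ has a single off-diagonal entry $B'f-fB$, which vanishes exactly because $f$ is a morphism of mixed complexes. Formation of the cone commutes with $(-)((u))$, i.e.\ $\Cone(f((u)))=\Cone(f)((u))$ as $(b+uB)$-complexes, and $f$ (resp.\ $f((u))$) is a quasi-isomorphism iff its cone is acyclic. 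Hence it suffices to prove: if $(E,b,B)$ is a mixed complex with $(E,b)$ acyclic, then $(E((u)),b+uB)$ is acyclic.

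For this last statement I would use the $u$-adic filtration $F^p=u^pE[[u]]$ on $E[[u]]$, which is decreasing, exhaustive, complete and separated. Writing a cocycle as $x=\sum_{k\ge 0}u^kx_k\in E[[u]]$ and expanding $(b+uB)x=0$ yields $bx_k+Bx_{k-1}=0$ for every $k$, where $x_{-1}:=0$. I would construct a primitive $y=\sum_{k\ge 0}u^ky_k$ inductively by solving $by_k=x_k-By_{k-1}$: in each step the right-hand side is a $b$-cocycle, since
$$b(x_k-By_{k-1})=bx_k+B(by_{k-1})=bx_k+B(x_{k-1}-By_{k-2})=bx_k+Bx_{k-1}=0$$
using $B^2=0$ and the previous equations, and hence a $b$-coboundary by acyclicity of $(E,b)$. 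The series $y$ is automatically $u$-adically convergent in $E[[u]]$, giving $(b+uB)y=x$. This proves acyclicity of $(E[[u]],b+uB)$. Finally $E((u))=\colim_n u^{-n}E[[u]]$, and a filtered colimit of acyclic complexes of $k$-vector spaces is acyclic, so $(E((u)),b+uB)$ is acyclic as well.

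The only genuine point is keeping the inductive solvability argument on $E[[u]]$ and its $u$-adic convergence in order; the cone reduction and the passage from $E[[u]]$ to $E((u))$ via the filtered colimit are formal.
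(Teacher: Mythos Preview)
Your argument is correct. The paper's own proof consists of the single phrase ``the second follows from the spectral sequence argument,'' meaning the spectral sequence associated with the $u$-adic filtration; your cone reduction followed by the explicit inductive construction of a primitive on $E[[u]]$ is precisely that argument unwound by hand, and the passage to $E((u))$ via the filtered colimit is the same in spirit. So the approach is essentially the same, only you have written out in full the details the paper leaves implicit.
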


\begin{proof}The first statement is obvious, and the second follows from the spectral sequence argument.\end{proof}

\begin{defi}A $u$-connection on a mixed complex $(C,b,B)$ is a $k$-linear operator
$$\nabla_u:C((u))\to C((u)),$$ which has the form
$$\nabla_u=\frac{d}{du}+A(u),$$
where $A(u)$ is a $k((u))$-linear operator from $C((u))$ to itself, and, moreover,
$$[\nabla_u,b+uB]=\frac1{2u}(b+uB).$$\end{defi}

We will consider morphisms compatible with $u$-connection.

\begin{defi}A morphism $f:(C,b,B,\nabla_u)\to (C^\prime,b^\prime,B^\prime,\nabla_u^\prime)$ of mixed complexes is

1) strictly compatible with $u$-connections
if
$$\nabla_u^\prime f((u))=f((u))\nabla_u.$$ In this case we will call $f$ a strict morphism;

2) weakly compatible with $u$-connections if the $k((u))$-linear operator
$$(\nabla_u^\prime f((u))-f((u))\nabla_u):C((u))\to C^\prime((u))$$
is $u$-homotopic to zero. That is, there exists a $k((u))$-linear operator
$H:C((u))\to C^\prime((u)),$ such that
$$\nabla_u^\prime f((u))-f((u))\nabla_u=(b^\prime+uB^\prime)H+H(b+uB).$$
In this case we will call a pair $(f,H)$ a weak morphism.\end{defi}

\subsection{Examples of $u$-connections.}
\label{subsec:ex_of_conn}

{\noindent}

{\bf Example:} {\it Hochschild complexes.} Let $T$ be a $\Z/2$-graded DG category. Following Shklyarov, we introduce the following $u$-connection on the mixed complex $\Hoch(T):$
$$\nabla_u:=\frac{d}{du}+\frac{\cU_{m_1}}{u^2}+\frac{\cV_{m_1}+\Gamma}{u},$$
where
$$\Gamma(f_n,\dots,f_0):=\frac{-n}2(f_n,\dots,f_0);$$
$$\cU_{m_1}(f_n,\dots,f_0):=\frac12(-1)^{\sum\limits_{k=1}^n|sf_k|+|f_0|(|f_n|+\sum\limits_{k=1}^{n-1}|sf_k|)}((df_0)f_n,f_{n-1},\dots,f_1);$$
$$\cV_{m_1}(f_n,\dots,f_0):=\begin{cases}\frac12\sum\limits_{0\leq i\leq j\leq n-1}(-1)^{\epsilon_{ij}} (e_{X_{j+1}},f_j,\dots,df_i,\dots,f_0,f_n,\dots,f_{j+1}) & \text{if }f_n\in\Hom_T(X_n,X_0);\\
0 & \text{if }f_n\in k\cdot e_{X_0},\end{cases}$$
where
$$\epsilon_{ij}=\sum\limits_{k=i+1}^n|sf_k|+(\sum\limits_{k=0}^{j}|sf_k|+1)(\sum\limits_{l=j+1}^n|sf_l|)$$

It was checked by Shklyarov (\cite{S}, section C.2) that $\nabla_u$ is indeed a $u$-connection.

\begin{remark}This $u$-connection was actually introduced in \cite{KKP} (without explicit formula).
Namely, consider a deformation $T_t$ of $T$ over $G_m:$ objects, spaces of morphisms and compositions do not change, but the differentials change by the formula
$$d_t=t\cdot d$$ (this deformation is naturally isomorphic to the one in \cite{KKP}, the isomorphism map is multiplication by $t^{-1}$ on morphisms). Then we have a $\Z/2$-graded bundle over $G_m\times \Spf k((u)):$
$$E=H^{\bullet}(\Hoch(T_t)((u)),b_t+uB_t).$$
It is equipped with Getzler-Gauss-Manin \cite{G} connection $\nabla^{GM}$ in the $t$-direction. Note that we have a natural identification of mixed complexes
$$(\Hoch(T_t),b_t,B_t)\to (C,tb,t^{-1}B),\quad (f_n,\dots,f_0)\mapsto t^{-n}(f_n,\dots,f_0).$$ This equips $E$ with a $G_m$-equivariant structure with respect to the action $$\mu\cdot(t,u)=(\mu t,\mu^2 u).$$ Let $\Lambda:E\to E$ be an operator of differentiation by $\frac{d}{d\mu}.$ Then $\Lambda$ has symbol
 $(t\frac{\partial}{\partial t}+2u\frac{\partial}{\partial u})\cdot\id_E.$ Therefore, the differential operator
  $$\frac{\Lambda}{2u}-\nabla^{GM}_{\frac{t}{2u}\frac{\partial}{\partial t}}$$
  has symbol $\frac{\partial}{\partial u}.$ Its restriction to the fiber $t=1$ is precisely the $u$-connection described above.\end{remark}

Below we will always write $\Hoch(T)$ for the mixed Hochschild complex with $u$-connection defined above.

\begin{prop}\label{prop:q_is_for_q_eq}A DG functor $F:T\to T^\prime$ induces a morphism of mixed complexes $F_*:\Hoch(T)\to \Hoch(T^\prime),$ strictly compatible with a
$u$-connections. Moreover, if $F$ is a Morita equivalence, then $F_*$ is a quasi-isomorphism.\end{prop}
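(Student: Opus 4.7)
The plan is to verify strict compatibility by direct inspection and then establish the Morita invariance by reduction to the quasi-equivalence case through perfect modules.

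\textbf{Definition and strict compatibility.} Extend $F$ to $T^e$ by $F(e_X) := e_{F(X)}$, and set
$$F_*(f_n,f_{n-1},\dots,f_0) := (F(f_n),F(f_{n-1}),\dots,F(f_0)).$$
Each of the operators $b_{m_1}$, $b_{m_2}$, $B$, $\Gamma$, $\cU_{m_1}$ and $\cV_{m_1}$ is assembled from DG-categorical data (composition, differential, insertion of adjoined identities $e_X$) together with length-dependent signs and scalars. Since a DG functor preserves composition and differentials strictly and the bar-length is visibly preserved by $F_*$, term-by-term inspection of the defining formulas shows $F_*$ commutes with $b$, with $B$, and with each piece of $\nabla_u$. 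Hence $F_*$ is a morphism of mixed complexes strictly compatible with $u$-connections.

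\textbf{Quasi-isomorphism under a Morita equivalence.} I would proceed in three steps.

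\emph{Step 1: Quasi-equivalence case.} Filter $\Hoch(T)$ by bar-length $n$. The differential on the associated graded is only $b_{m_1}$, acting termwise on each tensor factor, so its cohomology is built from the graded Hom-spaces $H^\bullet(\Hom_T(X,Y))$. A quasi-equivalence induces isomorphisms on all these graded Hom-spaces, so $F_*$ is an isomorphism on the $E^1$-page of the length filtration, hence a quasi-isomorphism.

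\emph{Step 2: Yoneda quasi-isomorphism.} For any small DG category $T$, the Yoneda embedding $y_T:T\hookrightarrow\Perf^{dg}(T)$ into the DG category of perfect $T$-modules induces a quasi-isomorphism $y_{T*}:\Hoch(T)\to\Hoch(\Perf^{dg}(T))$. This is a theorem of Keller \cite{Ke2}, which I would invoke as a black box.

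\emph{Step 3: Reduction.} A Morita equivalence $F:T\to T'$ extends to a quasi-equivalence $\Perf^{dg}(F):\Perf^{dg}(T)\to\Perf^{dg}(T')$ via left Kan extension on perfect modules. In the commutative square
$$
\begin{CD}
\Hoch(T) @>F_*>> \Hoch(T') \\
@VV{y_{T*}}V @VV{y_{T'*}}V \\
\Hoch(\Perf^{dg}(T)) @>>> \Hoch(\Perf^{dg}(T'))
\end{CD}
$$
the two vertical arrows are quasi-isomorphisms by Step 2 and the bottom arrow is a quasi-isomorphism by Step 1, so $F_*$ is a quasi-isomorphism.

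The main obstacle is Step 2: the Yoneda quasi-isomorphism is the one genuinely nontrivial input and does not follow from the Hochschild formulas in a direct computational way; this is where one has to appeal to bar-resolution/additivity arguments. The remaining steps (Step 1 is a routine spectral sequence, Step 3 is diagram chasing, and strict compatibility is pure functoriality) are all either formal checks on the explicit formulas or standard homological algebra.
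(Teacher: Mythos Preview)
Your treatment of strict compatibility is fine and matches the paper, which dismisses it as ``obvious.'' For the Morita invariance the paper gives no argument at all: it simply cites Keller \cite{Ke3} for the full statement.

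Your Step~1, however, has a real gap. A quasi-equivalence $F:T\to T'$ need not be surjective on objects---only essentially surjective on $\Ho^\bullet$. The $E^1$-page of the bar-length filtration on $\Hoch(T')$ is a direct sum over tuples $(X_0,\dots,X_n)$ of objects of $T'$, and $F_*$ lands only in the summands indexed by tuples coming from the image of $F$; so $F_*$ is not an isomorphism on $E^1$ in general. (Take $T$ with one object and $T'$ with two homotopy-equivalent copies of it: already at length zero the $E^1$-terms are $H^\bullet(\End)$ versus $H^\bullet(\End)^{\oplus 2}$.) What one can say is that $E^1(T)$ is the Hochschild complex of the graded category $\Ho^\bullet(T)$ and that $\Ho^\bullet(F)$ is an equivalence---but showing that an equivalence of $k$-linear categories induces a Hochschild quasi-isomorphism is itself a nontrivial instance of the very invariance you are trying to establish, so this does not close the gap for free. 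Since Step~3 invokes Step~1 for $\Perf^{dg}(F)$, the gap carries through to the conclusion. The honest route is the paper's: invoke \cite{Ke3} directly for the whole Morita statement rather than only for the Yoneda step.
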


\begin{proof}The first statement is obvious, and the second one is due to Keller \cite{Ke3}.\end{proof}

We will need one more result here (due to Keller).

\begin{defi}A sequence of small DG categories and DG functors
$T^\prime\to T\to T^{\prime\prime}$ is said to be exact up to Morita equivalence, if the image of the composition consists of objects homotopic to zero. and the choice of contracting homotopies defines a Morita equivalence
$T/T^\prime\to T^{\prime\prime}$\end{defi}

\begin{theo}\label{th:Keller_exact_seq}(\cite{Ke3}) Given an exact sequence of small DG categories $T^\prime\to T\to T^{\prime\prime},$
one has a natural exact triangle of Hochschild complexes $$\Hoch(T^\prime)\to \Hoch(T)\to \Hoch(T^{\prime\prime})\to\\ \Hoch(T^\prime)[1].$$
\end{theo}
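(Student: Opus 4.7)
The plan is to first reduce the statement to the special case where $T^{\prime\prime}$ is literally the Drinfeld DG quotient $T/T^\prime$. The hypothesis gives a Morita equivalence $T/T^\prime\to T^{\prime\prime}$; by Proposition \ref{prop:q_is_for_q_eq} it induces a quasi-isomorphism $\Hoch(T/T^\prime)\to\Hoch(T^{\prime\prime})$, so it is enough to produce a natural exact triangle
$$\Hoch(T^\prime)\to \Hoch(T)\to \Hoch(T/T^\prime)\to \Hoch(T^\prime)[1],$$
with the first map induced by the inclusion $T^\prime\hookrightarrow T$ and the second by the projection $T\to T/T^\prime$.

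Recall the explicit description of the quotient: its morphism complex decomposes as
$$\Hom_{T/T^\prime}(X,Y)=\Hom_T(X,Y)\oplus\bigoplus\limits_{n\geq 1,\,Z_i\in T^\prime}\Hom_T(X,Z_1)\otimes\Hom_T(Z_1,Z_2)\otimes\cdots\otimes\Hom_T(Z_n,Y)[n],$$
where the shifted summand corresponds to a string of contracting homotopies $h(Z_i)$. Inserting this into the definition of $\Hoch(T/T^\prime)$ gives a bigrading by the total $h$-count across all slots, and a decreasing filtration $F^p$ by this count. The piece $F^0$ is manifestly $\Hoch(T)$, and the composite $\Hoch(T)=F^0\hookrightarrow \Hoch(T/T^\prime)$ is the projection-induced map. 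So the content of the theorem is the identification of $F^{\geq 1}\Hoch(T/T^\prime)$ with $\Hoch(T^\prime)$ up to quasi-isomorphism, together with compatibility with $B$ and the $u$-connection.

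The core step is then to show that $F^{\geq 1}\Hoch(T/T^\prime)\simeq\Hoch(T^\prime)$. The map goes as follows: any chain in $F^{\geq 1}$ can be cyclically rotated (this is where the Connes-type rotations enter naturally) so that an $h(Z)$ sits in the leftmost slot; expanding the Hochschild differential on this augmented complex, one recognizes a two-sided bar construction of the diagonal $T^\prime$-bimodule resolving $\Hoch(T^\prime)[1]$, with the auxiliary $h(Z)$'s providing contracting homotopies in the slots. An acyclic-assembly / Eilenberg--Zilber argument then collapses the resolution, producing the boundary $\Hoch(T/T^\prime)\to \Hoch(T^\prime)[1]$ that reads off the leftmost $h$-letter.

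The main obstacle will be bookkeeping: checking that this quasi-isomorphism is a \emph{morphism of mixed complexes}, i.e.\ strictly commutes with the Connes differential $B$, and is strictly compatible with the $u$-connection of Subsection \ref{subsec:ex_of_conn}. Compatibility with $B$ requires that cyclic rotation inside the quotient matches cyclic rotation inside $T^\prime$, which is true only up to boundary corrections coming from the $h(Z_i)$'s; these corrections have to be absorbed into an explicit $B$-homotopy. Compatibility with $\nabla_u=\tfrac{d}{du}+\tfrac{\mathcal U_{m_1}}{u^2}+\tfrac{\mathcal V_{m_1}+\Gamma}{u}$ then reduces to the fact that $\mathcal U_{m_1}$, $\mathcal V_{m_1}$ and $\Gamma$ are built from the morphism-level differential $d$ and from insertions of $e_{X_i}$: since $d(h(Z))=\id_Z$ and the $h$-grading is preserved by these operators up to terms that lower it (landing in $F^{p-1}$), the filtration is respected and the connection descends, giving the desired exact triangle in the category of mixed complexes with $u$-connections.
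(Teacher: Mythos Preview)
The paper does not give its own proof of this theorem: it is stated with attribution to Keller \cite{Ke3} and used as a black box. In the one place it is applied (the proof of Proposition \ref{prop:map_to_RGamma}), the author explicitly remarks that ``the statement is just about Hochschild complexes without additional structures, so we can forget about Connes differential and the connection.'' So there is no argument in the paper to compare yours against, and your last paragraph is attempting to prove strictly more than the theorem asserts or than the paper ever uses: compatibility with $B$ and with $\nabla_u$ is simply not part of the statement.

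On the substance of your sketch, the filtration idea is reasonable but the indexing is off. Since $d(h(Z))=\id_Z$, the Hochschild differential can only \emph{lower} the total $h$-count, never raise it; hence the subspaces ``$h$-count $\le p$'' form an \emph{increasing} filtration by subcomplexes, with bottom piece $\Hoch(T)$, while your ``$F^{\ge 1}$'' is a quotient, not a subcomplex. More seriously, the identification of that quotient with $\Hoch(T')[1]$ is where all the content lies, and your mechanism for it does not work as written: ``cyclically rotate so that an $h(Z)$ sits in the leftmost slot'' is not a chain-level operation, because cyclic permutation of Hochschild chains is only a symmetry modulo boundaries (this is exactly what the norm map $N$ and the Connes $B$ encode). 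One cannot normalize representatives that way and then ``recognize a two-sided bar construction''; the phrases ``acyclic-assembly / Eilenberg--Zilber'' gesture toward the right kind of collapse but do not explain why only $\Hoch(T')$ survives rather than something built from all of $T$. Keller's actual proof in \cite{Ke3} proceeds via his formalism of localization pairs and exact sequences of DG categories, not via an explicit filtration of the Drinfeld quotient; if you want to push your approach through, you would need an honest spectral-sequence analysis of the associated graded, identifying each graded piece with an iterated bar resolution and showing the higher pieces are acyclic.
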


{\bf Example:} {\it Hochschild complexes of the second kind.} Let $\cD$ be a $\Z/2$-graded CDG category. Then we define a $u$-connection on the mixed complex $\Hoch^{\Pi}(\cD)$ by the formula
$$\nabla_u:=\frac{d}{du}+\frac{2\cU_{m_0}+\cU_{m_1}}{u^2}+\frac{2\cV_{m_0}+\cV_{m_1}+\Gamma}{u},$$
where $\Gamma,$ $\cU_{m_1}$ and $\cU_{m_2}$ are as above, and
$$\cU_{m_0}(f_n,\dots,f_0):=(h_{X_0}f_n,f_{n-1},\dots,f_0);$$
$$\cV_{m_0}(f_n,\dots,f_0):=\begin{cases}\sum\limits_{0\leq i\leq j\leq n} (-1)^{\eta_{ij}}(e_{X_{j}},f_{j-1},\dots,h_{X_i},\dots,f_0,f_n,\dots,f_{j}) & \text{if }f_n\in\Hom_T(X_n,X_0);\\
0 & \text{if }f_n\in k\cdot e_{X_0},\end{cases}$$
where
$$\eta_{ij}=\sum\limits_{k=i}^n|sf_k|+(\sum\limits_{k=0}^{j-1}|sf_k|+1)(\sum\limits_{l=j}^n|sf_l|)+1$$

One can check that this indeed defines a $u$-connection (just analogously to the case of DG categories).
 A special case of polynomial algebra with zero differential and some curvature was verified in \cite{S}, section D.3. Below we will write $\Hoch^{\Pi}(\cD)$ for the mixed Hochschild complex of the second kind with $u$-connection defined above.

\begin{prop}\label{prop:q_is_for_ps_eq}(\cite{PP}) A strict CDG functor $F:\cD\to \cD^\prime$ induces a morphism of mixed complexes $F_*:\Hoch^{\Pi}(\cD)\to \Hoch^{\Pi}(\cD^\prime),$ strictly compatible with
$u$-connections. Moreover, if $F$ is a pseudo-equivalence, then $F_*$ is a quasi-isomorphism.\end{prop}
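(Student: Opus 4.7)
The first assertion is a direct verification from the formulas. Strictness of $F$ (i.e.\ all $\alpha_X=0$) forces $F(fg)=F(f)F(g)$, $F(df)=dF(f)$, $F(\id_X)=\id_{F(X)}$, and $F(h_X)=h_{F(X)}$. Extending $F$ to $F^e:\cD^e\to(\cD')^e$ via $e_X\mapsto e_{F(X)}$ and applying $F^e$ termwise gives a graded map $F_*:\Hoch^{\Pi}(\cD)\to\Hoch^{\Pi}(\cD')$, well defined on the direct products. The components $b_{m_0}, b_{m_1}, b_{m_2}$ of $b$, the Connes differential $B$, and the connection operators $\Gamma, \cU_{m_0}, \cU_{m_1}, \cV_{m_0}, \cV_{m_1}$ are each defined by formulas using only composition, Hom-differentials, curvatures, and the symbols $e_X$; hence $F_*$ commutes with $b$, $B$, and $\nabla_u$, yielding strict compatibility.

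For the quasi-isomorphism claim, the plan is to use the structure of a pseudo-equivalence and reduce to elementary steps. Replacing $\cD'$ by the closure of the image under graded isomorphism, we may assume $F$ is a fully faithful inclusion of a full CDG subcategory. Next, exhaust $\cD'$ by full CDG subcategories $\cD=\cD_0\subseteq\cD_1\subseteq\cdots$ in which $\cD_{n+1}$ is obtained from $\cD_n$ by adjoining a single object of one of the four basic types: a shift, a direct sum, a direct summand, or a twist. It then suffices to show each inclusion $\cD_n\hookrightarrow\cD_{n+1}$ induces a quasi-isomorphism on the $b$-complexes. For shifts, direct sums, and direct summands, the adjoined object $Y$ comes equipped with $d$-closed even morphisms $i,j$ satisfying $ij=\id_Y$ and $ji=\id_X$ (suitably interpreted); a standard contracting homotopy built from repeated insertion of these identities kills the chains passing through $Y$. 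For twists, $Y$ is graded-isomorphic to $X$ but $jd(i)=\tau$ and $jh_Yi-h_X=d\tau+\tau^2$; filtering $\Hoch^{\Pi}(\cD_{n+1})$ by the number of $\tau$-insertions reduces the statement to the graded-isomorphism case already handled.

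The main obstacle will be verifying convergence of these homotopies and filtrations in the presence of the direct products defining $\Hoch^{\Pi}$. This is precisely what distinguishes the second-kind complex from the ordinary Hochschild complex and what makes it the correct invariant in the CDG setting: the filtration by $\tau$-insertions must be shown to be compatible with the direct product (each graded piece sitting in a separate Hom-tensor summand) in order for the associated spectral sequence to converge. The careful analysis has been carried out by Polishchuk and Positselski in \cite{PP}, so the cleanest implementation is to adapt their argument; the additional compatibility of the resulting quasi-isomorphism with the $u$-connection is then automatic from the first part.
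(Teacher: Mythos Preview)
The paper does not actually prove this proposition: it is stated without proof, the first assertion being an evident verification and the second being a result of Polishchuk--Positselski \cite{PP}. Your proposal therefore supplies more detail than the paper itself.

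Your first paragraph is correct and is exactly the verification the paper leaves implicit: since a strict CDG functor preserves composition, differentials, curvatures, and identities, and since $b$, $B$, $\Gamma$, $\cU_{m_i}$, $\cV_{m_i}$ are built solely from these, $F_*$ commutes with all of them.

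Your sketch for the quasi-isomorphism part is a reasonable outline of the argument in \cite{PP}, and you correctly flag the genuine issue: the homotopies and filtrations must be shown to interact well with the direct products in $\Hoch^{\Pi}$. One small inaccuracy: for a direct summand $Y$ of $X$ you have closed even maps $i:Y\to X$, $j:X\to Y$ with $ji=\id_Y$, but not $ij=\id_X$; similarly for a direct sum $Y=\bigoplus X_\alpha$ one has $\sum i_\alpha j_\alpha=\id_Y$ rather than a single pair of mutually inverse maps. Your parenthetical ``suitably interpreted'' covers this, but the contracting homotopy in each case has to be written accordingly (it is the standard idempotent-splitting argument for Hochschild chains, transported to the second-kind setting). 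Since you ultimately defer to \cite{PP} for the careful treatment, as the paper does, your proposal is in line with the paper's own approach.
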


One can also show that non-strict CDG functor induces a morphism of mixed complexes weakly compatible with $u$-connections,
but we do not need it.

\begin{prop}Let $T$ be a $\Z/2$-graded DG category. Then the natural map
of mixed complexes $$\Hoch(T)\to \Hoch^{\Pi}(T)$$ is strictly compatible with $u$-connections.\end{prop}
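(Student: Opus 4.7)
The plan is to observe that everything boils down to comparing formulas when all curvatures vanish. Since $T$ is an honest DG category, it is regarded as a CDG category with $h_X = 0$ for every object $X$, and the natural map $\Hoch(T) \to \Hoch^{\Pi}(T)$ is nothing but the inclusion of the direct sum into the direct product along the length filtration. So I would not need to write down any new map; I only have to check that the inclusion is a morphism of mixed complexes with $u$-connections, strictly so.

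Next I would verify compatibility with the mixed complex structure. The formulas for $b_{m_2}$, $b_{m_1}$, and $B$ are literally the same on both sides, so compatibility there is a tautology. The only extra piece in $\Hoch^{\Pi}$ is the term $b_{m_0}$, defined by insertion of the curvatures $h_{X_i}$; since $h_{X_i} = 0$ for all $i$, this term acts by zero on the image of the inclusion. Hence the inclusion is automatically a morphism of mixed complexes.

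For strict compatibility with $u$-connections, I would compare the formulas term-by-term. On $\Hoch(T)$ the connection is $\tfrac{d}{du} + u^{-2}\,\cU_{m_1} + u^{-1}(\cV_{m_1} + \Gamma)$, while on $\Hoch^{\Pi}(T)$ it is $\tfrac{d}{du} + u^{-2}(2\cU_{m_0} + \cU_{m_1}) + u^{-1}(2\cV_{m_0} + \cV_{m_1} + \Gamma)$. The operators $\Gamma$, $\cU_{m_1}$, $\cV_{m_1}$ are defined by identical formulas in both cases, and the $\tfrac{d}{du}$ part clearly commutes with the inclusion. The only new terms, $\cU_{m_0}$ and $\cV_{m_0}$, are built entirely out of insertions of curvature elements $h_{X_i}$; since $h_{X_i} = 0$ they act as zero on $\Hoch^{\Pi}(T)$ in particular on the image of $\Hoch(T)$. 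This yields strict, not merely weak, commutation with $\nabla_u$.

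There is essentially no obstacle here; the statement is a vanishing observation rather than a computation. The only mildly delicate point is ensuring that the inclusion of direct sums into direct products is well-defined after passing to $C(\!(u)\!)$ — but the $u$-connection preserves the length filtration in both cases (each of $\cU_{m_1}, \cV_{m_1}, \Gamma$ changes the tensor length by a bounded amount), so the inclusion induces a well-defined map $\Hoch(T)(\!(u)\!) \to \Hoch^{\Pi}(T)(\!(u)\!)$ intertwining both $b + uB$ and $\nabla_u$. This finishes the proof.
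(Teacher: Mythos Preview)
Your argument is correct and is precisely the routine verification the paper leaves implicit (the proposition is stated without proof): since $T$ has zero curvatures, the additional operators $b_{m_0}$, $\cU_{m_0}$, $\cV_{m_0}$ in $\Hoch^{\Pi}$ vanish identically, and the remaining terms are given by the same formulas on both sides, so the inclusion of direct sums into direct products intertwines the $u$-connections on the nose.
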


\begin{proof}This follows immediately from the definitions.\end{proof}

We need the following result, due to Polishchuk and Positselski (they consider just Hochschild complexes without additional data).

\begin{prop}\label{prop:Pol-Pos}(\cite{PP}) Let $X=\Spec A$ be affine smooth algebraic variety over $k,$ where $\charact k=0,$ and $W\in\cO(X)$ Assume that the critical locus of $W$ is in the fiber $W^{-1}(0).$ Then the morphism of mixed complexes with $u$-connections
$$\Hoch(MF^{nv}(X,W))\to \Hoch^{\Pi}(MF^{nv}(X,W))$$
is a quasi-isomorphism.\end{prop}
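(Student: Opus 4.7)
The plan is to reduce the statement to the Polishchuk--Positselski theorem comparing Hochschild complexes of the first and second kind, which handles the analogous assertion after forgetting the $B$-differential and the $u$-connection. The extra structure will turn out to be preserved essentially for free.

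First I would observe that $MF^{nv}(X,W)$ is an \emph{honest} DG category: the relation $\delta^2=W\cdot\id_E$ is built into the definition of objects and is not produced as a non-zero CDG curvature, so all object-curvatures $h_X$ vanish. Consequently, in the explicit formulas for $\Hoch^{\Pi}(MF^{nv}(X,W))$ given in Subsections \ref{subsec:mixed} and \ref{subsec:ex_of_conn}, every term built by insertion of some $h_{X_i}$---namely $b_{m_0}$, $\cU_{m_0}$, and $\cV_{m_0}$---vanishes identically. Hence the operators $b$, $B$, and $\nabla_u$ on $\Hoch^{\Pi}(MF^{nv}(X,W))$ are given by exactly the same formulas as their counterparts on $\Hoch(MF^{nv}(X,W))$, and the natural inclusion of direct sums into direct products is a \emph{strict} morphism of mixed complexes with $u$-connection.

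By definition, a quasi-isomorphism of mixed complexes is a quasi-isomorphism of the underlying $b$-complexes, so it remains to check that
$$(\Hoch(MF^{nv}(X,W)),b)\longrightarrow(\Hoch^{\Pi}(MF^{nv}(X,W)),b)$$
is an ordinary quasi-isomorphism. This is precisely the content of \cite{PP}: under the assumptions that $A$ is smooth of finite type, $\charact k=0$, and $\Crit(W)\subset W^{-1}(0)$, the cone of the inclusion of direct sums into direct products is acyclic, its acyclicity being controlled by a Koszul-type complex built from the partial derivatives of $W$ which becomes exact exactly under the critical-locus hypothesis.

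The main (and only non-formal) obstacle is this Polishchuk--Positselski input itself, which is a genuine computation exploiting smoothness and characteristic zero. The preservation of $(B,\nabla_u)$ required in the first step, by contrast, is automatic: every way in which the second-kind formulas differ from the first-kind ones involves an insertion of a curvature $h_X$ and so is identically zero on a DG category.
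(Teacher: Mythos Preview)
Your argument is correct and matches the paper's treatment: the paper gives no proof but cites \cite{PP} for the $b$-complex quasi-isomorphism, having already noted (in the unnumbered proposition just before Proposition~\ref{prop:Pol-Pos}) that for a genuine DG category the inclusion $\Hoch(T)\to\Hoch^{\Pi}(T)$ is strictly compatible with $u$-connections --- which is exactly your observation that the curvature-insertion terms $b_{m_0},\cU_{m_0},\cV_{m_0}$ vanish on $MF^{nv}(X,W)$.
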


We will need one more fact.

\begin{prop}\label{prop:opposite}For any small curved CDG category $\cD,$ we have a natural weak quasi-isomorphism of mixed complexes with $u$-connections:
$$(\Phi,H):\Hoch^{\Pi}(\cD)\to \Hoch^{\Pi}(\cD^{op}),b^{op},B^{op}).$$\end{prop}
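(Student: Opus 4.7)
The plan is to build $\Phi$ as a cyclic-reversal map with Koszul signs, modelled on the classical isomorphism between the Hochschild complex of an algebra and that of its opposite. Explicitly, I would send a generator $(f_n,f_{n-1},\dots,f_0)\in\Hoch^{\Pi}(\cD)$ to $\pm(f_n^{op},f_0^{op},f_1^{op},\dots,f_{n-1}^{op})\in\Hoch^{\Pi}(\cD^{op})$, keeping the cap in the first slot and using the standard Koszul sign determined by the parities $|sf_0|,\dots,|sf_n|$ and $|f_n|$. The central subtlety distinguishing the CDG case from the DG one is that $h_{X^{op}}=-h_X$, so sign issues will appear in the $b_{m_0}$ piece of the Hochschild differential and in the $\cU_{m_0},\cV_{m_0}$ pieces of the $u$-connection.

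First I would check that $\Phi$ is in fact a strict isomorphism of mixed complexes, i.e.\ $\Phi b=b^{op}\Phi$ and $\Phi B=B^{op}\Phi$. Compatibility with $b_{m_2}$ and $b_{m_1}$ is standard, using the composition rule $f^{op}g^{op}=(-1)^{|f||g|}(gf)^{op}$ and $d^{op}=d$. Compatibility with $b_{m_0}$ is the spot where the sign $h_{X^{op}}=-h_X$ must be absorbed, which does happen after tracking Koszul signs in $\Phi$ against those in $b_{m_0}$. Compatibility with $B$ follows from cyclic invariance of the reversal. In the same way I would verify strict compatibility of $\Phi$ with the $m_1$-portion of the $u$-connection, namely with $\cU_{m_1}$, $\cV_{m_1}$ and $\Gamma$, which do not involve curvature; this is essentially Shklyarov's argument from the DG case.

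The discrepancy between $\nabla_u^{op}\Phi((u))$ and $\Phi((u))\nabla_u$ is therefore concentrated in the $m_0$-part. Because $h_{X^{op}}=-h_X$, the naive identifications of $\cU_{m_0}^{op}\Phi$ with $\Phi\cU_{m_0}$ and of $\cV_{m_0}^{op}\Phi$ with $\Phi\cV_{m_0}$ fail by explicit signs, producing a $k((u))$-linear error term built from insertions of $h_X$ at various positions. I would then write down a homotopy $H$ of Rinehart type, obtained by the same cyclic-insertion pattern as $\cV_{m_0}$ but without the curvature factor (and with a single compensating $e_{X_i}$), and check directly that $(b^{op}+uB^{op})H+H(b+uB)$ reproduces this error term. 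Since $\Phi$ itself is a quasi-isomorphism, the pair $(\Phi,H)$ is the desired weak quasi-isomorphism.

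The main obstacle is the explicit construction and verification of $H$: the identity to be checked is a matching of two long combinatorial expressions with many Koszul signs, and the correct form of $H$ is dictated by viewing the $\cU_{m_0}+\cV_{m_0}$ contribution to $\nabla_u$ as a ``curvature boundary'' term whose homotopy-theoretic role becomes visible precisely under the opposite equivalence. Once $H$ is pinned down, the remaining checks are mechanical propagation of signs through the definitions of $b_{m_0},b_{m_1},b_{m_2}$ and the Connes differential.
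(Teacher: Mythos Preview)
Your construction of $\Phi$ and your verification that it commutes with $b$ and $B$ match the paper exactly. The gap is in your analysis of the $u$-connection: you assert that $\Phi$ is \emph{strictly} compatible with the $m_1$-portion $(\cU_{m_1},\cV_{m_1},\Gamma)$, and that the only discrepancy comes from the curvature terms $(\cU_{m_0},\cV_{m_0})$ via the sign $h_{X^{op}}=-h_X$. This is not correct. Already in the DG case (no curvature at all), Shklyarov's Proposition~3.8 is a \emph{weak} morphism, not a strict one: the reversal map fails to intertwine $\cU_{m_1}$ and $\cV_{m_1}$ on the nose, and a homotopy $H_{m_1}$ is required. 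The paper's proof is explicitly described as a generalization of that argument, and its homotopy has two summands
\[
H=H_{m_1}+H_{m_0},
\]
where $H_{m_1}$ places $df_i^{op}$ in the cap position (cyclically permuting the rest) and $H_{m_0}$ places $h_{X_i^{op}}$ in the cap position, both with a $u^{-2}$ factor. Neither term has the ``Rinehart type'' shape you describe, with an $e_{X_i}$ in front and no curvature or differential insertion.

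So the missing idea is that the homotopy must correct for \emph{both} $m_1$ and $m_0$ discrepancies, and its shape is dictated by putting the offending operator ($d$ or $h$) into the cap slot rather than by inserting an extra identity. Once you take $H_{m_1}$ from Shklyarov and add the analogous $H_{m_0}$ (with the sign from $h_{X^{op}}=-h_X$ built in), the verification is indeed mechanical, as you say; but with your current localization of the error and your proposed form of $H$, the computation will not close.
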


\begin{proof} The proof is a generalization of the proof of Proposition 3.8 of \cite{S}.

First, the morphism $\Phi$ is given by the formula
$$\Phi(f_n,f_{n-1},\dots,f_0):=(-1)^{n+\sum\limits_{0\leq i<j\leq n-1}|sf_i||sf_j|}(f_n^{op},f_0^{op},\dots,f_{n-1}^{op}).$$
It is easily seen to commute with Hochschild and Connes differentials. Moreover, it is obviously a quasi-isomorphism.

Further, the odd morphism $$H=H_{m_1}+H_{m_0}:\Hoch^{\Pi}(\cD)((u))\to \Hoch^{\Pi}(\cD^{op})((u))$$ is given by the formulas
$$H_{m_1}(f_n,\dots,f_0):=\begin{cases}\frac{(-1)^{\epsilon_i}}{2u^2}\sum\limits_{i=0}^{n-1}(df_i^{op},f_{i+1}^{op},\dots,f_n^{op},f_0^{op},\dots,f_{i-1}^{op}) & \text{if }f_n\in\Hom_{\cD}(X_n,X_0);\\
0 & \text{if }f_n\in k\cdot e_{X_0};\end{cases}$$
$$H_{m_0}(f_n,\dots,f_0):=\begin{cases}\frac{(-1)^{\eta_i}}{u^2}\sum\limits_{i=0}^n (h_{X_i^{op}},f_i^{op},\dots,f_n^{op},f_0^{op},\dots,f_{i-1}^{op}) & \text{if }f_n\in\Hom_{\cD}(X_n,X_0);\\
0 & \text{if }f_n\in k\cdot e_{X_0},\end{cases}$$
where
$$\eta_i=\sum\limits_{0\leq k<l\leq i-1}|sf_k||sf_l|+\sum\limits_{i\leq k<l\leq n}|sf_k||sf_l|,$$
$$\epsilon_i=\eta_i+1\text{ for }0\leq i\leq n-1.$$
It is straightforward to check that the pair $(f,H)$ defines a weak morphism.
\end{proof}

{\bf Example:} {\it Twisted de Rham mixed complex.} If $A$ is a smooth commutative $k$-algebra of finite type, we define a $u$-connection on the mixed complex
$(\Omega^{\bullet}(A),-dW\wedge,d)$ by the formula
$$\nabla_u^{DR}:=\frac{d}{du}+\frac{\Gamma}{u}+\frac{W}{u^2},$$
where $$\Gamma_{\mid\Omega^p(A)}:=-\frac{p}2\cdot\id.$$

\begin{prop}\label{prop:HKR_map}Let $A$ be as above. Then the Hochschild-Kostant-Rosenberg map $$\veps:\Hoch^{\Pi}(A,W)\to (\Omega^{\bullet}(A),-dW\wedge,d),$$ defined by the formula $$\veps(a_0,\dots,a_n):=\frac{1}{n!}a_0da_1\wedge\dots\wedge da_n,$$
is a quasi-isomorphism of mixed complexes, weakly compatible with connections. Moreover, the contracting homotopy is given by the formula
$$H=-\frac{Wd}{2u}\veps.$$\end{prop}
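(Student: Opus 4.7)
The plan is to verify four things in order: (a) $\veps\, b=-dW\wedge\veps$, (b) $\veps\, B=d\,\veps$, (c) $\veps$ is a quasi-isomorphism, and (d) the pair $(\veps,H)$ with $H=-\frac{Wd}{2u}\veps$ is a weak morphism of mixed complexes with $u$-connections.

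For (a) and (b), observe first that the CDG algebra $(A,W)$ has trivial differential, so the pieces $b_{m_1}$ on $\Hoch^{\Pi}(A,W)$ and the operators $\cU_{m_1},\cV_{m_1}$ in the Hochschild $u$-connection all vanish identically. The vanishing $\veps\circ b_{m_2}=0$ is the classical Hochschild--Kostant--Rosenberg computation for commutative $A$: the Leibniz-style sums in $b_{m_2}$ become symmetric in neighbouring arguments, against which the antisymmetry of the wedge product, combined with the $\frac{1}{n!}$ normalization, forces zero. For the curvature-insertion piece $b_{m_0}$, the $n+1$ terms $(f_n,\dots,f_i,W,f_{i-1},\dots,f_0)$ of length $n+2$ each map under $\veps$ to $\frac{1}{(n+1)!}f_n\,df_{n-1}\wedge\dots\wedge df_i\wedge dW\wedge df_{i-1}\wedge\dots\wedge df_0$; moving $dW$ to the front introduces signs which, combined with the $(-1)^{\sum_{k\geq i}|sf_k|}$ in the definition of $b_{m_0}$, collapse all $n+1$ terms to a single copy of $-dW\wedge\veps(f_n,\dots,f_0)$, using $\frac{n+1}{(n+1)!}=\frac{1}{n!}$. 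The identity $\veps B=d\,\veps$ is the original HKR observation: each of the $n+1$ cyclic rotations in $B$ which places the formal unit $e_X$ in the outer slot contributes the same $(n+1)$-wedge $\frac{1}{(n+1)!}df_n\wedge\dots\wedge df_0$ up to the cyclic sign, totalling $d\veps$.

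For (c), I would filter $\Hoch^{\Pi}(A,W)$ by the number of $W$-insertions, so that the $E_0$-differential is the ordinary Hochschild boundary $b_{m_2}$. Classical HKR for the smooth commutative algebra $A$ identifies $E_1$ with $\Omega^{\bullet}(A)$, and the $b_{m_0}$-perturbation induces $-dW\wedge$ on $E_1$, matching the target. Convergence of the spectral sequence in the presence of the direct product in the definition of $\Hoch^{\Pi}$ is delicate, but handled either by the Polishchuk--Positselski framework or, in the affine smooth setting, by combining Proposition~\ref{prop:Pol-Pos} with the classical HKR identification of first-kind Hochschild homology.

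Part (d) is the computational core. Expand
\[
\nabla_u^{DR}\veps-\veps\nabla_u = (-dW\wedge+ud)H+H(b+uB),
\]
using $\Gamma_{DR}\veps=\veps\Gamma_{Hoch}$ (both act by $-\frac{n}{2}$ on the $(n+1)$-tensor / $n$-form piece) together with the vanishings $\cU_{m_1}=\cV_{m_1}=0$. The surviving contributions on the left come from $\veps\cU_{m_0}$ and $\veps\cV_{m_0}$; on the right, one substitutes $H=-\frac{Wd}{2u}\veps$ and applies $d^{2}=0$, the graded commutation $d\circ(dW\wedge)=-(dW\wedge)\circ d$, and the identities $\veps b=-dW\wedge\veps$, $\veps B=d\veps$ from (a)--(b). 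The resulting equality is a purely formal identity in $(A,W)$ and is exactly the one Shklyarov verifies in Section~D.3 of \cite{S} for a polynomial algebra with isolated singularity; the same manipulations transfer verbatim. The main obstacle is the sign and combinatorial bookkeeping in (d), in particular the double sum $\sum_{0\leq i\leq j\leq n}$ defining $\cV_{m_0}$; once Shklyarov's sign conventions are adopted, the verification is mechanical.
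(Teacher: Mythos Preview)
Your proposal is correct and follows essentially the same approach as the paper: the paper defers the verification that $(\veps,H)$ is a weak morphism to Shklyarov (noting the computation is identical in the general smooth affine case), and obtains the quasi-isomorphism from classical HKR plus a spectral sequence argument (citing \cite{Seg}). One minor point: the paper cites Section~D.5 of \cite{S} for the homotopy verification, not D.3 (which is where the $u$-connection on $\Hoch^{\Pi}$ is checked).
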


\begin{proof}It was checked in \cite{S}, section D.5, that the pair $(\veps,H)$ defines a weak morphism for the special case of a polynomial algebra $A$ with $W$ having isolated singularity, but the checking is the same in the general case. Further, the fact that $\veps$ is a quasi-isomorphism follows from the classical Hochschild-Kostant-Rosenberg theorem \cite{HKR} and an obvious spectral sequence argument, see \cite{Seg}.\end{proof}

\subsection{Sheaves of mixed complexes with $u$-connections.}
\label{subsec:sheaves_of_mixed_with_conn}

Let $X$ be a scheme of finite type over a field $k.$ For any presheaf of $k$-vector spaces $\cF$ on $X$ we define a presheaf $\cF((u))$ of $k((u))$-vector spaces
by the formula
$$\cF((u))(U):=\cF(U)((u)).$$

\begin{lemma}\label{lemma:functor((u))}If $\cF$ is a sheaf on $X,$ then so is $\cF((u)).$ The functor $\cF\mapsto\cF((u))$ is exact on the category of sheaves of $k$-vector spaces.\end{lemma}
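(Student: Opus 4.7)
The plan is to exploit the Noetherianity of $X$ (as a scheme of finite type over $k$): every open subset is quasi-compact, and every descending chain of open subsets of $X$ stabilizes.

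For the first assertion, I would reduce the sheaf condition for $\cF((u))$ to the case of finite covers, which suffices on a Noetherian space by a standard quasi-compactness argument. The functor $V \mapsto V((u))$ on $k$-vector spaces is exact, being the composition of $V \mapsto V[[u]] = \prod_{m \geq 0} u^m V$ (an arbitrary product, exact in $k$-mod) with $\colim_n u^{-n}(\cdot)$ (a filtered colimit, also exact). In particular it preserves finite limits, hence finite products and equalizers. Applied to the equalizer presentation of $\cF(U)$ arising from a finite open cover $\{U_i\}$ of $U$, this yields the corresponding equalizer presentation of $\cF(U)((u))$ in terms of the $\cF(U_i)((u))$, which is exactly the sheaf axiom for $\cF((u))$ on that cover.

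For exactness, let $0 \to \cF' \to \cF \to \cF'' \to 0$ be an exact sequence of sheaves. Left exactness and exactness at the middle term are straightforward: since $\Gamma(U,-)$ is left exact and $V \mapsto V((u))$ is exact on $k$-vector spaces, the sequence $0 \to \cF'(U)((u)) \to \cF(U)((u)) \to \cF''(U)((u))$ is left exact for every open $U$, identifying $\cF'((u))$ with the kernel sheaf of $\cF((u)) \to \cF''((u))$. I expect the main obstacle to be surjectivity of $\cF((u)) \to \cF''((u))$ as a map of sheaves: this is not visible on arbitrary sections, since a Laurent series is lifted coefficient-by-coefficient on a priori varying opens, so it requires a genuine use of the Noetherian hypothesis.

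To handle surjectivity, I would argue as follows. Given $x \in U$ and $\sigma = \sum_{n \geq -N} u^n \sigma_n \in \cF''(U)((u))$, the stalk-wise surjectivity of $\cF \to \cF''$ provides, for each $n$, an open neighborhood $V_n \subseteq U$ of $x$ and a section $\tau_n \in \cF(V_n)$ with $\tau_n \mapsto \sigma_n|_{V_n}$. The partial intersections $W_n := V_{-N} \cap V_{-N+1} \cap \dots \cap V_n$ form a descending chain of opens, which must stabilize by Noetherianity; hence $V := \bigcap_{n \geq -N} V_n = W_{N_0}$ for some finite $N_0$, so $V$ is open, contains $x$, and is contained in every $V_n$. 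The restrictions $\tau_n|_V$ then assemble into $\tau := \sum_{n \geq -N} u^n (\tau_n|_V) \in \cF(V)((u))$ mapping to $\sigma|_V$, yielding the desired local surjectivity and hence surjectivity as a morphism of sheaves.
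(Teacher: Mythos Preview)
Your argument for the first assertion is correct and is essentially the paper's: reduce the sheaf axiom to finite covers by quasi-compactness, then use that $V \mapsto V((u))$ is exact on $k$-vector spaces and hence preserves the relevant equalizer diagram. Your handling of left-exactness in the second assertion is likewise fine.

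The genuine gap is in your surjectivity step. You claim that the descending chain of opens $W_{-N} \supseteq W_{-N+1} \supseteq \cdots$ stabilizes ``by Noetherianity''. This is false: a Noetherian topological space satisfies the \emph{ascending} chain condition on open subsets (equivalently, DCC on closed subsets), not DCC on opens. Concretely, on $X = \mathbb{A}^1_k$ with $k$ infinite, choose distinct closed points $p_0, p_1, p_2, \ldots$ all different from $x$ and put $V_n = X \setminus \{p_n\}$; then $W_n = X \setminus \{p_0,\dots,p_n\}$ is strictly decreasing, and $\bigcap_n V_n = X \setminus \{p_i : i \geq 0\}$ contains $x$ but no open neighborhood of $x$. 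So the conclusion that $V$ is open fails, and with it your construction of the lift $\tau$.

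This is not a cosmetic slip. Nothing in the Noetherian hypothesis forces the coefficientwise lifting neighborhoods to admit a common open refinement near $x$; indeed, one can manufacture surjections $\cF \twoheadrightarrow \cF''$ of sheaves of $k$-vector spaces on $\mathbb{A}^1$ (built from constant sheaves and skyscrapers at the $p_n$) for which no single neighborhood of $x$ lifts all coefficients of a given element of $\cF''(X)[[u]]$ simultaneously. The paper's own proof of exactness is only the sentence ``The second statement follows from the first one'', which does not address surjectivity at all; so while your proposal is more explicit than the paper at this point, the explicit argument you supply is incorrect, and right-exactness remains unestablished by either.
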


\begin{proof}The first statement is an easy consequence of the fact that any open subset of $X$ is quasi-compact.
Indeed, because of this fact we have to check that for any finite collection of open subsets $U_1,\dots,U_n\subset X$
there is an exact sequence
$$0\to\cF(\bigcup\limits_{i=1}^nU_i)((u))\to\prod\limits_{i=1}^n\cF(U_i)((u))\to \prod\limits_{1\leq i<j\leq n}\cF(U_i\cap U_j)((u)).$$
But this follows from the corresponding exact sequence for the sheaf $\cF.$

The second statement follows from the first one.\end{proof}

\begin{defi}1) A (pre)sheaf of mixed complexes with a $u$-connection on $X$ is a (pre)sheaf on $X$ (with Zariski topology) with values in the category of mixed complexes with $u$-connections and morphisms strictly compatible with $u$-connections.

That is, to define such a (pre)sheaf $(\un{C},\un{b},\un{B},\un{\nabla_u})$ we need to define for each open subset $U\subset X$ a mixed complex with $u$-connection $(\un{C}(U),\un{b}(U),\un{B}(U),\un{\nabla_u}(U)),$ and for any inclusion of open subsets $V\subset U$ a map of mixed complexes strictly compatible with $u$-connections
$$f_{UV}:(\un{C}(U),\un{b}(U),\un{B}(U),\un{\nabla_u}(U))\to (\un{C}(V),\un{b}(V),\un{B}(V),\un{\nabla_u}(V)),$$
which satisfy the axioms of a (pre)sheaf.

2) A strict morphism of (pre)sheaves of mixed complexes with $u$-connections $$f:(\un{C},\un{b},\un{B},\un{\nabla_u})\to (\un{C}^{\prime},\un{b}^{\prime},\un{B}^{\prime},\un{\nabla_u}^{\prime})$$ is an assignment to each open subset $U\subset X$ a morphism $$f(U):(\un{C}(U),\un{b}(U),\un{B}(U),\un{\nabla_u}(U))\to (\un{C}^{\prime}(U),\un{b}^{\prime}(U),\un{B}^{\prime}(U),\un{\nabla_u}^{\prime}(U))$$
of mixed complexes strictly compatible with $u$-connections, such that for any $V\subset U$ we get a strictly commutative square.

3) A weak morphism of (pre)sheaves of mixed complexes with $u$-connections
$$(f,H):(\un{C},\un{b},\un{B},\un{\nabla_u})\to (\un{C}^{\prime},\un{b}^{\prime},\un{B}^{\prime},\un{\nabla_u}^{\prime})$$
is a pair of a morphism $$f:(\un{C},\un{b},\un{B})\to (\un{C}^{\prime},\un{b}^{\prime},\un{B}^{\prime})$$
of (pre)sheaves of mixed complexes and an odd $k((u))$-linear morphism
$$H:\un{C}((u))\to\un{C}^\prime((u))$$ of $\Z/2$-graded (pre)sheaves of $k((u))$-vector spaces, such that
for any open subset $U\subset X$ we have
$$\un{\nabla_u}^\prime(U)f(U)((u))-f(U)((u))\un{\nabla_u}(U)=(\un{b}^\prime(U)+u\un{B}^\prime(U))H+H(\un{b}(U)+u\un{B}(U)).$$
In the case of sheaves, such a morphism is called a quasi-isomorphism if the morphism
$$f:(\un{C},\un{b})\to (\un{C}^\prime,\un{b}^\prime)$$ is a quasi-isomorphism of complexes of (pre)sheaves of vector spaces.
\end{defi}

Note that a strict morphism $f$ can be treated as a weak morphism $(f,0).$

Clearly, the standard sheafification construction works for mixed complexes with $u$-connections. We will
denote by $\un{C}^{sh}$ the sheafification of $\un{C}.$

We have an obvious sufficient condition for a morphism of mixed complexes with $u$-connections to be a quasi-isomorphism.

\begin{prop}\label{prop:suffices_for_affine}If a weak morphism of presheaves
$$(f,H):(\un{C},\un{b},\un{B},\un{\nabla_u})\to (\un{C}^{\prime},\un{b}^{\prime},\un{B}^{\prime},\un{\nabla_u}^{\prime})$$
induces quasi-isomorphisms
$$f(U):(\un{C}(U),\un{b}(U))\to (\un{C}^\prime(U),\un{b}^\prime(U))$$
for each open affine subset $U\subset X,$ then $$(f^{sh},H^{sh}):(\un{C}^{sh},\un{b},\un{B},\un{\nabla_u})\to (\un{C}^{\prime sh},\un{b}^{\prime},\un{B}^{\prime},\un{\nabla_u}^{\prime})$$ is a quasi-isomorphism.\end{prop}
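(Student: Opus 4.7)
The extra structure---the Connes differentials $\un{B}, \un{B}^\prime$, the $u$-connections $\un{\nabla_u}, \un{\nabla_u}^\prime$, and the homotopy $H$---is irrelevant here: by the definition at the end of Subsection \ref{subsec:sheafs_of_mixed_with_conn}, a weak morphism of sheaves is called a quasi-isomorphism iff the underlying map $f$ is a quasi-isomorphism of complexes of sheaves $(\un{C}^{sh}, \un{b}) \to (\un{C}^{\prime\,sh}, \un{b}^\prime)$. So the plan is to verify just this latter condition, by working at the level of stalks at each point $x \in X$.

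First I would recall two standard facts about sheafification on a scheme with the Zariski topology. (i) Sheafification of presheaves of $k$-vector spaces is exact, and in particular commutes with the formation of cohomology (pre)sheaves with respect to any differential. Equivalently at the stalk level, $\un{C}^{sh}_x = \un{C}_x = \colim_{x \in U} \un{C}(U)$, and since this filtered colimit is exact on $k$-vector spaces, one has
$$ H^i(\un{C}^{sh}_x, \un{b}_x) \;=\; \colim_{x \in U} H^i(\un{C}(U), \un{b}(U)). $$
(ii) Open affine neighborhoods of $x$ form a cofinal subsystem in the filtered poset of all open neighborhoods of $x$, since any open subset of a scheme contains an affine open neighborhood of each of its points. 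Hence the colimit above may be restricted to run over affine $U$, and similarly for $\un{C}^{\prime\,sh}_x$.

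By hypothesis $f(U)$ is a quasi-isomorphism for every affine open $U \subset X$, so each term of the cofinal system of affine neighborhoods of $x$ is sent isomorphically onto its image on cohomology by $f(U)$; the induced map on the colimit is then an isomorphism. Therefore $(f^{sh})_x$ induces isomorphisms on $\un{b}$-cohomology for every point $x \in X$, which is exactly the required condition. There is no serious obstacle: the whole argument is just the exactness of sheafification combined with the fact that affine opens form a basis of the Zariski topology on $X$.
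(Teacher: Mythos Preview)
Your proof is correct and is essentially a detailed unpacking of the paper's one-line argument: the paper simply says ``open affine subsets of $X$ form a base of Zariski topology, which implies the result,'' and your stalk-level computation via cofinality of affine neighborhoods is exactly how one verifies that implication.
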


\begin{proof}Indeed, open affine subsets of $X$ form a base of Zariski topology, which implies the result.\end{proof}

We will be mostly interested in sheaves of mixed complexes coming from presheaves of (curved) DG categories.

\begin{defi}1) A presheaf $\un{T}$ of (curved) DG categories on $X$ is an assignment to each open subset $U\subset X$ of a small $\Z/2$-graded (curved) DG category $\un{T}(U),$ and to each inclusion $V\subset U$ a DG (resp. strict CDG) functor
$F_{UV}:\un{T}(U)\to \un{T}(V)$ such that for $U_1\supset U_2\supset U_3$ we have an equality of (C)DG functors
$F_{U_1U_3}=F_{U_2U_3}F_{U_1U_2}.$

2) A morphism of presheaves of (C)DG categories $F:\un{T}\to \un{T}^\prime$ is a collection of DG (resp. strict CDG) functors $F(U):\un{T}(U)\to\un{T}^\prime(U)$ such that for any inclusion $V\subset U$ we get a strictly commutative square.\end{defi}

Starting from a presheaf of (C)DG categories $\un{T}$ on $X,$ we obtain a sheaf of mixed complexes with $u$-connections $\Hoch(\un{T})$ (resp. $\Hoch^{\Pi}(T)$) associated with a presheaf
$$U\mapsto \Hoch(\un{T}(U))\text{ (resp. }U\mapsto \Hoch^{\Pi}(T(U))).$$

\begin{prop}1) Let $F:\un{T}\to\un{T}^\prime$ be a morphism of presheaves of DG categories on $X.$ Then it induces a strict morphism of sheaves of mixed complexes with $u$-connections
$$F_*:\Hoch(\un{T})\to \Hoch(\un{T}^\prime).$$ Moreover, if for any open affine subset $U\subset X$ the DG functor
$F(U)$ is a quasi-equivalence, then $F_*$ is a quasi-isomorphism.

2) Let $G:\un{D}\to\un{D}^\prime$ be a morphism of presheaves of CDG categories on $X.$ Then it induces a strict morphism of sheaves of mixed complexes with $u$-connections
$$G_*:\Hoch^{\Pi}(\un{\cD})\to \Hoch^{\Pi}(\un{\cD}^\prime).$$ Moreover, if for any open affine subset $U\subset X$ the CDG functor
$G(U)$ is a pseudo-equivalence, then $G_*$ is a quasi-isomorphism.

3) For any presheaf of DG categories $\un{T}$ on $X$ we have a strict morphism of sheaves of mixed complexes with $u$-connections
$$\Hoch(\un{T})\to \Hoch^{\Pi}(\un{T}).$$\end{prop}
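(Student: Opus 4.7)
The proof is essentially a sheaf-theoretic packaging of results already established section-wise. I would organize it in three short pieces corresponding to the three parts of the statement.

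For part 1, at the level of presheaves, for every open $U\subset X$ the DG functor $F(U):\un{T}(U)\to\un{T}^\prime(U)$ induces, by Proposition \ref{prop:q_is_for_q_eq}, a morphism of mixed complexes $F(U)_*:\Hoch(\un{T}(U))\to\Hoch(\un{T}^\prime(U))$ strictly compatible with the $u$-connection. The compatibility with restrictions $V\subset U$ is built into the Hochschild complex formula (it is functorial in the DG category), so we obtain a strict morphism of presheaves of mixed complexes with $u$-connection, and hence of the associated sheaves. For the quasi-isomorphism assertion, I would invoke Proposition \ref{prop:suffices_for_affine}: it suffices to verify that $F(U)_*$ is a quasi-isomorphism of ordinary complexes $(\Hoch(\un{T}(U)),b)\to(\Hoch(\un{T}^\prime(U)),b^\prime)$ for each open affine $U$. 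Since any quasi-equivalence of DG categories is a Morita equivalence, the second statement of Proposition \ref{prop:q_is_for_q_eq} provides exactly this.

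For part 2 the argument is strictly parallel: replace $\Hoch$ by $\Hoch^{\Pi}$, the reference Proposition \ref{prop:q_is_for_q_eq} by Proposition \ref{prop:q_is_for_ps_eq}, and the words ``quasi-equivalence'' and ``Morita equivalence'' by ``pseudo-equivalence.'' Strict CDG functors preserve the data used in the construction of $\Hoch^{\Pi}$ (including the curvature insertions $h_{X_i}$ and the $u$-connection terms $\cU_{m_0},\cV_{m_0}$), so functoriality and strict compatibility with $\nabla_u$ are direct from the formulas of Subsection \ref{subsec:ex_of_conn}, and the affine-local quasi-isomorphism criterion of Proposition \ref{prop:suffices_for_affine} again reduces the global claim to the section-wise statement of Proposition \ref{prop:q_is_for_ps_eq}.

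For part 3, the map $\Hoch(\un{T}(U))\to\Hoch^{\Pi}(\un{T}(U))$ is the natural inclusion of the direct sum into the direct product in each tensor degree $n\geq 1$, extended by the identity in degree $0$ and combined with $b_{m_0}=0$ on the DG side. One checks at the level of explicit formulas that this inclusion commutes with $b_{m_1}+b_{m_2}$ and with $B$, and a term-by-term comparison of the $u$-connection expressions in Subsection \ref{subsec:ex_of_conn} shows that the extra pieces $\cU_{m_0},\cV_{m_0}$ vanish when all curvatures are zero, so the inclusion is strictly compatible with $\nabla_u$. This is natural in $U$, hence defines a strict morphism of presheaves, and sheafifies to the asserted strict morphism.

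The only step requiring genuine care is the affine-local reduction in parts 1 and 2: I need to know that sheafification preserves strict compatibility with $u$-connections and that the passage $\cF\mapsto\cF((u))$ commutes with sheafification and is exact. Both follow from Lemma \ref{lemma:functor((u))} (using quasi-compactness of opens of $X$), so no obstacle arises. Everything else is bookkeeping on the formulas of Subsection \ref{subsec:ex_of_conn} together with the already-proved Propositions \ref{prop:q_is_for_q_eq}, \ref{prop:q_is_for_ps_eq}, and \ref{prop:suffices_for_affine}.
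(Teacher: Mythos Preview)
Your proposal is correct and follows essentially the same approach as the paper: the paper's proof is a one-line invocation of Propositions \ref{prop:suffices_for_affine}, \ref{prop:q_is_for_q_eq}, and \ref{prop:q_is_for_ps_eq}, and you have simply unpacked these references in the natural way. The extra remark about Lemma \ref{lemma:functor((u))} is not strictly needed (sheafification of a presheaf valued in mixed complexes with $u$-connections and strict morphisms is automatic by functoriality), but it does no harm.
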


\begin{proof}The strict morphisms are defined in the obvious way, and the quasi-isomorphisms follow from Proposition \ref{prop:suffices_for_affine},
Proposition \ref{prop:q_is_for_q_eq} and Proposition \ref{prop:q_is_for_ps_eq}.\end{proof}

We will need also a sheaf of de Rham mixed complexes with $u$-connections.

\begin{defi}Assume that $X$ is smooth and $\charact k=0,$ and $W\in\cO(X)$ is a regular function. Then we have a (twisted de Rham)
sheaf
$$(\Omega^{\bullet}_X,-dW\wedge,d,\nabla_u^{DR}=\frac{d}{du}+\frac{\Gamma}{u}+\frac{W}{u^2})$$
of mixed complexes with $u$-connections, where $\Gamma_{|\Omega^p}=\frac{-p}2\cdot\id$ is as above.\end{defi}

\begin{prop}\label{prop:HKR_gen}Let $X$ be smooth and $\charact k=0.$ Suppose that we have a function $W\in\cO(X),$ and
consider the presheaf $(\cO_X,W)$ of CDG algebras.

Then the maps $\veps$ and $H$ from Proposition \ref{prop:HKR_map} define a weak morphism of mixed complexes with $u$-connections $$(\veps,H):\Hoch^{\Pi}(\cO_X,W),\un{b},\un{B},\un{\nabla_u})\to (\Omega^{\bullet}_X,-dW\wedge,d,\nabla_u=\frac{d}{du}+\frac{\Gamma}{u}+\frac{W}{u^2}),$$ which is a quasi-isomorphism.\end{prop}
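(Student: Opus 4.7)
The plan is to deduce this global statement from its affine incarnation (Proposition \ref{prop:HKR_map}) by a naturality/sheafification argument, and then apply the sheafification criterion (Proposition \ref{prop:suffices_for_affine}) to upgrade the affine quasi-isomorphism to a quasi-isomorphism of sheaves.

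First, for each open affine $U\subset X$, Proposition \ref{prop:HKR_map} applied to the CDG algebra $(\cO(U),W|_U)$ supplies a weak morphism
$$(\veps_U,H_U):\Hoch^{\Pi}(\cO(U),W|_U)\to(\Omega^{\bullet}(\cO(U)),-dW\wedge,d)$$
of mixed complexes with $u$-connections, whose underlying morphism is a quasi-isomorphism. The explicit formulas $\veps(a_0,\dots,a_n)=\frac{1}{n!}a_0\,da_1\wedge\cdots\wedge da_n$ and $H=-\frac{Wd}{2u}\veps$ are manifestly natural in the commutative CDG algebra $(A,W)$: they are built only from multiplication, the universal derivation, and multiplication by the curvature $W$, all of which commute with any morphism of commutative algebras sending one curvature to another. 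Consequently, for an inclusion $V\subset U$ of open affines the restriction $\cO(U)\to\cO(V)$ intertwines $(\veps_U,H_U)$ with $(\veps_V,H_V)$.

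Thus the local pairs $(\veps_U,H_U)$ assemble into a weak morphism of presheaves on the base of open affines, which I would then extend to all open subsets and sheafify, using that the target $\Omega^{\bullet}_X$ is already a sheaf and that sheafification interacts well with the functor $(-)((u))$ by Lemma \ref{lemma:functor((u))}. This produces the desired weak morphism of sheaves $(\veps,H)$. Compatibility of $(\veps_U,H_U)$ with the specific $u$-connection terms $\tfrac{\Gamma}{u}$ and $\tfrac{W}{u^2}$ on the de Rham side, and with $\tfrac{2\cU_{m_0}+\cU_{m_1}}{u^2}+\tfrac{2\cV_{m_0}+\cV_{m_1}+\Gamma}{u}$ on the Hochschild side, is also local — all terms are defined pointwise from $W$ and the algebra operations — so naturality under restriction is preserved.

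Finally, the quasi-isomorphism claim follows immediately from Proposition \ref{prop:suffices_for_affine}: since $\veps_U$ is a quasi-isomorphism of mixed complexes for every open affine $U\subset X$ by Proposition \ref{prop:HKR_map}, the sheafified map $\veps^{sh}$ is a quasi-isomorphism of sheaves. I do not anticipate a genuine obstacle here; the entire content of the proposition beyond the affine case is the observation that the HKR map and its homotopy are given by universal formulas and therefore sheafify without incident. If there is a subtle point at all, it is checking that the product-type definition of $\Hoch^{\Pi}$ does not interfere with sheafification — but since Proposition \ref{prop:suffices_for_affine} is stated precisely for presheaves of this form, this is already accounted for by the framework of Subsection \ref{subsec:sheafs_of_mixed_with_conn}.
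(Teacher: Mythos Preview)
Your proposal is correct and follows essentially the same approach as the paper: the paper's proof is a one-line citation of Proposition \ref{prop:suffices_for_affine} and Proposition \ref{prop:HKR_map}, and you have simply spelled out the naturality/sheafification argument that underlies that citation.
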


\begin{proof}Indeed, this follows from Proposition \ref{prop:suffices_for_affine} and Proposition \ref{prop:HKR_map}.\end{proof}

Finally we would like to define the derived functor of global sections for sheaves of mixed complexes with $u$-connections. For any sheaf $\cF$ of $k$-vector spaces on $X$ we have a flasque Godement resolution
$$\cF\to G(\cF),$$ where
$$G^0(\cF)(U)=Gode(\cF)(U):=\prod\limits_{x\in U}\cF_x,$$
and $$G^i:=Gode(\coker(G^{i-2}(\cF)\to G^{i-1}(\cF))),\quad i>0,$$
where we put $G^{-1}(\cF):=\cF.$

Suppose that the scheme $X$ has dimension $d.$ Then we can apply the canonical truncation
and get the truncated Godement resolution by acyclic sheaves
$$G_{\leq d}(\cF):=\tau_{\leq d}G(\cF).$$

Note that both Godement and truncated Godement resolutions are functorial. This allows us to apply them for mixed complexes with $u$-connections.

\begin{defi}Let $X$ be a separated scheme of finite type over $k,$ and $\dim X=d.$ Take any sheaf of mixed complexes with connections $\un{C}=(\un{C},\un{b},\un{B},\un{\nabla_u}).$ Applying to it truncated Godement resolution and taking the total complex, we get a new sheaf of mixed complex with $u$-connections
$G_{\leq d}(\un{C}).$
We put
$$\bR\Gamma(X,\un{C}):=\Gamma(X,G_{\leq d}(\un{C})).$$
This is a mixed complex with $u$-connection.\end{defi}

Clearly, $\bR\Gamma$ is functorial with respect to weak and strict morphisms. We need the following basic result.

\begin{prop}\label{prop:RGamma_pres_q_is}The functor $\bR\Gamma$ preserves weak quasi-isomorphisms.\end{prop}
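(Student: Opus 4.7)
The plan is to apply $G_{\leq d}$ and then $\Gamma(X,-)$ componentwise to the data of a weak morphism, verify by functoriality that the result is itself a weak morphism of mixed complexes with $u$-connections, and then deduce the quasi-isomorphism claim from a standard hypercohomology spectral sequence argument.

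First I would observe that the Godement construction is functorial on sheaves of $k$-vector spaces, and in particular it takes every piece of a weak morphism $(f,H)\colon \un{C}\to \un{C}'$ to a corresponding map between truncated Godement complexes. Specifically, $f$ is a morphism of $\Z/2$-graded sheaves commuting with $\un{b}$ and $\un{B}$, and $H$ is a $k((u))$-linear morphism of $\Z/2$-graded (pre)sheaves after applying $(-)(u))$. Using Lemma \ref{lemma:functor((u))} to commute $(-)(u))$ with sheafification, both $f$ and $H$ lift to maps $G_{\leq d}(f)$, $G_{\leq d}(H)$ between the truncated Godement bicomplexes. By functoriality, $G_{\leq d}(H)$ commutes with the Godement differential $d_G$, so after passing to the total complex it serves as a homotopy for the total differential $\un{b}+d_G$, with the Koszul sign absorbed by the odd parity of $H$. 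Taking global sections is a $k$-linear functor, so the result is a weak morphism $\bR\Gamma(X,(f,H))\colon \bR\Gamma(X,\un{C})\to \bR\Gamma(X,\un{C}')$ of mixed complexes with $u$-connections.

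For the quasi-isomorphism claim, I would argue that the $b$-complex underlying $\bR\Gamma(X,\un{C})$ computes the hypercohomology $\mathbb{H}^{\bullet}(X,(\un{C},\un{b}))$ of $(\un{C},\un{b})$ viewed as a $\Z/2$-graded complex of sheaves. This is precisely why the truncation at $d=\dim X$ is built into the definition: each $G_{\leq d}^i$ is a $\Gamma$-acyclic resolution (by Grothendieck's vanishing together with flabbiness of the Godement sheaves), so the total complex of global sections computes $\mathbb{H}^{\bullet}$. The hypercohomology spectral sequence with $E_2^{p,q}=H^p(X,\cH^q(\un{C},\un{b}))$ then converges strongly because the Godement filtration is bounded after truncation. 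By the weak quasi-isomorphism hypothesis, $f$ induces isomorphisms on the cohomology sheaves $\cH^q(\un{C},\un{b})\xrightarrow{\sim}\cH^q(\un{C}',\un{b}')$, so the induced map is an isomorphism on the $E_2$-page, and by convergence on the abutments; thus $\bR\Gamma(X,f)$ is a quasi-isomorphism of $b$-complexes.

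The main obstacle is purely formal bookkeeping: juggling the internal $\Z/2$-grading of $\un{C}$, the cohomological grading of the Godement resolution, and the formal variable $u$, together with the Koszul sign conventions needed so that the total differential $\un{b}+d_G$ anticommutes with $\un{B}$ and interacts correctly with $\un{\nabla_u}$ to preserve the $u$-connection identity and the weak homotopy identity for $(f,H)$. None of the steps are deep; everything reduces, once signs are pinned down, to the standard statement that a quasi-isomorphism of $\Z/2$-graded complexes of sheaves induces an isomorphism on hypercohomology.
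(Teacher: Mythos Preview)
Your argument is correct, but organized differently from the paper's. The paper gives a three-line proof: it reduces (via the cone of $f$) to showing that $\Gamma(X,G_{\leq d}(\un{C}))$ is acyclic whenever $\un{C}$ is $b$-acyclic; then it observes that each $G^n(\un{C})$ is $b$-acyclic by induction, so the total complex $G_{\leq d}(\un{C})$ is acyclic and consists of $\Gamma$-acyclic sheaves, whence $\Gamma$ of it is acyclic. No spectral sequence is invoked, and functoriality with respect to weak morphisms is declared ``clear'' just before the proposition rather than argued. Your hypercohomology spectral sequence comparison is the standard textbook route and is equally valid; one small caution is that the page $E_2^{p,q}=H^p(X,\cH^q(\un{C},\un{b}))$ arises from a Cartan--Eilenberg resolution rather than directly from the componentwise Godement filtration used in the definition of $\bR\Gamma$, so your convergence justification (``the Godement filtration is bounded'') literally applies to a slightly different spectral sequence than the one whose $E_2$ you wrote down. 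Either spectral sequence does the job, since the resolution degree is bounded by $d$ in both cases. What the paper's approach buys is brevity; what yours buys is an explicit treatment of the weak-morphism functoriality that the paper simply asserts.
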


\begin{proof}It suffices to prove that for any acyclic $\Z/2$-graded complex of sheaves of vector spaces $\un{C}$ the complex $\Gamma(X,G_{\leq d}(\un{C}))$ is acyclic.

To show this, note that by induction the complex $G^n(\un{C})$ is acyclic for all $n\geq 0.$ Hence, the complex $G_{\leq d}(\un{C})$ is acyclic. But it consists of acyclic sheaves. This implies that $\Gamma(X,G_{\leq d}(\un{C}))$ is also acyclic.\end{proof}

\begin{prop}\label{prop:mixed_implies_k((u))}Let $(\un{C},\un{b},\un{B},\un{\nabla_u})$ be a sheaf of mixed complexes with $u$-connections. Then
we have an isomorphism of bundles with connection on the formal punctured disk:
$$(H^{\bullet}(\bR\Gamma(X,(\un{C})((u)),b+uB),\nabla_u))\cong (H^{\bullet}(X,(\un{C}((u)),\un{b}+u\un{B})),\nabla_u).$$
In particular, if we have a chain of quasi-isomorphisms between mixed complexes
$$(C^\prime,b^\prime,B^\prime,\nabla_u^\prime),\quad \bR\Gamma(X,(\un{C},\un{b},\un{B},\un{\nabla_u})),$$
then we get an isomorphism
$$(H^{\bullet}(C^\prime((u)),b^\prime+uB^\prime),\nabla_u^\prime)\cong (H^{\bullet}(X,(\un{C}((u)),\un{b}+u\un{B})),\nabla_u).$$\end{prop}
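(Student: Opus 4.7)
The plan is to recognize the right-hand side hypercohomology as computable via the very Godement resolution used to define $\bR\Gamma$, with $((u))$-coefficients inserted throughout. Three ingredients control the argument: exactness of $\cF\mapsto\cF((u))$ on sheaves (Lemma \ref{lemma:functor((u))}), the fact that $\Gamma$ commutes with $((u))$ on sheaves (since $\cF((u))(X)=\cF(X)((u))$), and the observation that $\cF$ flabby implies $\cF((u))$ flabby, because surjectivity of restriction maps is preserved by extending scalars.

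First I would show that $G_{\leq d}(\un{C})((u))$, equipped with total differential $\un{b}+d_G+u\un{B}$ (where $d_G$ is the Godement coboundary), is a resolution of $(\un{C}((u)),\un{b}+u\un{B})$ by $\Gamma$-acyclic sheaves. The $\Gamma$-acyclicity follows from the flabbiness inheritance above (for the last, truncated term one uses that $X$ has cohomological dimension $d$, just as in the definition of $\bR\Gamma$). For the resolution property, the Godement augmentation $\un{C}\to G_{\leq d}(\un{C})$ is a quasi-isomorphism of complexes of sheaves with differentials $\un{b}$ and $\un{b}+d_G$, and exactness of $((u))$ preserves this. To promote it to a quasi-isomorphism after adding $u\un{B}$, I would use the filtration by powers of $u$. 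Since this filtration on $(-)((u))$ is not complete below, the argument proceeds first on $(-)[[u]]$, where the filtration is complete Hausdorff and the $E_0$-page reduces to the Godement quasi-isomorphism, so the standard complete-filtered comparison theorem applies; one then extends back to $(-)((u))$ by flat base change along $k[[u]]\hookrightarrow k((u))$. Granting this, $\Gamma(X,G_{\leq d}(\un{C})((u)))$ computes $H^{\bullet}(X,\un{C}((u)),\un{b}+u\un{B})$.

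Next I would assemble the identifications
$$\bR\Gamma(X,\un{C})((u))=\Gamma(X,G_{\leq d}(\un{C}))((u))=\Gamma(X,G_{\leq d}(\un{C})((u))),$$
the first by definition of $\bR\Gamma$ and the second by $\Gamma\circ((u))=((u))\circ\Gamma$ on sheaves. Combined with the previous step this yields the desired isomorphism on $H^{\bullet}$. For the connection, the $u$-connection on $G_{\leq d}(\un{C})$ is obtained functorially from $\un{\nabla_u}$ (which acts on the stalks comprising each $G^n$) together with a $\tfrac{N}{2u}$-correction, where $N$ is the Godement-degree operator, so that $[\nabla_u,b_{\rm tot}+uB]=\tfrac{1}{2u}(b_{\rm tot}+uB)$ continues to hold on the total complex. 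Naturality of this extension with respect to the augmentation $\un{C}\to G_{\leq d}(\un{C})$ ensures that the cohomological isomorphism intertwines the two connections, giving the identification of bundles with connection on the formal punctured disk. The ``in particular'' clause is then immediate: a chain of weak quasi-isomorphisms between mixed complexes with $u$-connections induces, by the spectral sequence argument recalled above, isomorphisms on $H^{\bullet}((-)((u)),b+uB)$ compatible with $\nabla_u$ (the $u$-homotopy $H$ vanishes on cohomology), combined with Proposition \ref{prop:RGamma_pres_q_is} and the first part of the current proposition.

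The main obstacle is the $u\un{B}$-promotion step: the natural $u$-adic filtration on $(-)((u))$ is not complete, so one cannot directly invoke spectral sequence convergence. The detour through $[[u]]$-coefficients followed by flat base change is what makes the argument go through; the remainder of the proof is a bookkeeping exercise in the interaction of Godement resolutions with $((u))$ and with the $u$-connection.
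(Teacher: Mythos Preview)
Your argument is essentially correct and reaches the same conclusion as the paper, but the route differs in one notable way and there is a minor gap.

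The paper does not filter by powers of $u$ at all. Instead it first proves a small lemma: for \emph{any} $\Gamma$-acyclic sheaf $\cG$, the sheaf $\cG((u))$ is again $\Gamma$-acyclic (take a flabby resolution of $\cG$, apply $((u))$, and use that flabby $\Rightarrow$ $((u))$ flabby). This handles all terms of $G_{\leq d}(\un{C})((u))$ uniformly, including the truncated top term, which is acyclic but not flabby; your remark that ``one uses that $X$ has cohomological dimension $d$'' only gives acyclicity of $G^d_{\leq d}(\un{C})$ itself, not of $G^d_{\leq d}(\un{C})((u))$, so you would need exactly this lemma to close that step. Once the lemma is in hand, the paper compares the two acyclic resolutions $G_{\leq d}(\un{C}((u)))$ and $G_{\leq d}(\un{C})((u))$ of $\un{C}((u))$ via the natural map between them and applies $\Gamma(X,-)$. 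Because the Godement direction is bounded (length $d+1$) and the rows of the augmented double complex are exact as sheaves, the standard double-complex argument applies with no convergence issues; the $u$-adic filtration, and hence your detour through $[[u]]$ followed by flat base change, is unnecessary.

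So: your $[[u]]$/base-change manoeuvre is valid but avoidable, and the acyclicity of the truncated term needs the ``acyclic $\Rightarrow$ $((u))$ acyclic'' lemma rather than a direct appeal to the cohomological dimension of $X$. On the other hand, your explicit mention of the $\tfrac{N}{2u}$-correction to the $u$-connection on the total Godement complex is a point the paper leaves implicit; it is indeed needed for $[\nabla_u, b+d_G+uB]=\tfrac{1}{2u}(b+d_G+uB)$ to hold.
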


\begin{proof}For any sheaf $\cF$ of $k$-vector spaces on $X,$ we claim that $G_{\leq d}(\cF)((u))$ is an acyclic resolution of the sheaf $\cF((u)).$
Indeed, the fact that it is a resolution follows from the exactness of $-((u))$ (Lemma \ref{lemma:functor((u))}), so we have to check that it consists of acyclic sheaves. This reduces to the following lemma.

\begin{lemma}For any acyclic sheaf $\cG$ on $X,$ the sheaf $\cG((u))$ is also acyclic.\end{lemma}

\begin{proof}In the special case when $\cG$ is flasque, we have that $\cG((u))$ is also flasque, hence acyclic. Indeed, the restriction maps $\cF(U)((u))\to \cF(V)((u))$ are surjective since the maps $\cF(U)\to\cF(V)$ are surjective.

For arbitrary acyclic $\cG,$ take any of its flasque resolution $\cK^{\bullet}(\cG).$ Then $\cK^{\bullet}(\cG)((u))$ is a flasque resolution of $\cG((u)).$ Applying $\Gamma(X,-)$ to this resolution, we see that $\cG((u))$ is acyclic.\end{proof}

Now, we have a natural morphism of acyclic resolutions (which is of course a quasi-isomorphism):
$$G_{\leq d}(\cF((u)))\to G_{\leq d}(\cF)((u)).$$
By functoriality, it follows that we have a quasi-isomorphism of $\Z/2$-graded complexes of acyclic sheaves of $k((u))$-modules, with $u$-connections: $u$-connections:
$$(G_{\leq d}(\un{C}((u))),G_{\leq d}(\un{b})+uG_{\leq d}(\un{B}),G_{\leq d}(\un{\nabla_u}))\to (G_{\leq d}(\un{C})((u)),G_{\leq d}(\un{b})+uG_{\leq d}(\un{B}),G_{\leq d}(\un{\nabla_u})).$$
Applying to it the functor $\Gamma(X,-)$ and passing to cohomology, we obtain the desired isomorphism. This proves the proposition.
\end{proof}

\section{Affine case}
\label{sec:aff_case}

Fix some field $k$ of characteristic zero.

In this section we will prove Theorem \ref{intro:main_mixed} in the affine case.

Let $A$ be a smooth commutative algebra of finite type over $k,$ and take some $W\in A.$ Put $X:=\Spec A$
We assume that the critical locus of $W$ is contained in $W^{-1}(0)\subset X.$
Note that the  strict morphism of complexes with $u$-connections
\begin{equation}\label{eq:affcase1}(\Omega_A^{\bullet},-dW,d,\nabla_u^{DR})\to \bR\Gamma(X,(\Omega_X^{\bullet},-dW,d,\nabla_u^{DR}))\end{equation}
is a quasi-isomorphism. This follows from the fact that the sheaves $\Omega_X^p$ are acyclic since $X$ is affine.

So our goal is to construct a chain of weak quasi-isomorphisms between
$$\Hoch(MF_{coh}(X,W))\text{ and }(\Omega_A^{\bullet},-dW,d,\nabla_u^{DR}).$$

First, by Proposition \ref{prop:equiv_for_affine} and Proposition \ref{prop:q_is_for_q_eq} we have a strict quasi-isomorphism
\begin{equation}\Hoch((A,W)\text{-mod}^{cdg}_{fgp})\stackrel{\sim}{\to} \Hoch(MF_{coh}(X,W)).\end{equation}
Further, by Proposition \ref{prop:Pol-Pos} we have a strict quasi-isomorphism
\begin{equation}\Hoch((A,W)\text{-mod}^{cdg}_{fgp})\stackrel{\sim}{\to} \Hoch^{\Pi}((A,W)\text{-mod}^{cdg}_{fgp}).\end{equation}
Applying Proposition \ref{prop:q_is_for_ps_eq} to pseudo-equivalences from Proposition \ref{prop:cdg_qdg_ps_eq}, we obtain
strict quasi-isomorphisms
\begin{equation}\Hoch^{\Pi}((A,W)\text{-mod}^{cdg}_{fgp})\stackrel{\sim}{\to} \Hoch^{\Pi}((A,W)\text{-mod}^{qdg}_{fgp})\stackrel{\sim}{\leftarrow}\Hoch^{\Pi}(A,-W).\end{equation}
Next, we apply Proposition \ref{prop:opposite} to the curved DG algebra $(A,-W),$ and get a weak quasi-isomorphism
\begin{equation}\Hoch^{\Pi}(A,-W)\stackrel{\sim}{\to} \Hoch^{\Pi}(A,W).\end{equation}
Finally, by Proposition \ref{prop:HKR_map} we have a weak quasi-isomorphism
\begin{equation}\label{eq:affcase7}\Hoch^{\Pi}(A,W)\stackrel{\sim}{\to} (\Omega_A^{\bullet},-dW,d,\nabla_u^{DR}).\end{equation}

Combining quasi-isomorphisms \eqref{eq:affcase1}-\eqref{eq:affcase7}, we obtain the following result.

\begin{theo}\label{th:main_affcase}Let $X$ be a smooth affine scheme of finite type over a field $k$ of characteristic zero. Then there is a chain of weak quasi-isomorphisms
between mixed complexes with $u$-connections
$$\Hoch(MF_{coh}(X,W))\text{ and }(\Gamma(X,\Omega_X^{\bullet}),-dW,d,\nabla_u^{DR}).$$ \end{theo}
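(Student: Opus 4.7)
The plan is to assemble a zigzag of weak quasi-isomorphisms from $\Hoch(MF_{coh}(X,W))$ to $(\Omega_A^{\bullet},-dW,d,\nabla_u^{DR})$ by routing through several intermediate categories for which the comparison tools have already been set up in Section \ref{sec:mixed_connections}. On the de Rham side, since $X$ is affine and the sheaves $\Omega_X^p$ are acyclic, $\Gamma(X,\Omega_X^{\bullet}) = \Omega_A^{\bullet}$, so that target is already in the required form and nothing further needs to be done on the sheaf-theoretic side.

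I would first reduce from the quotient category $MF_{coh}(X,W)$ to the naive category $MF^{nv}(X,W)$, which is identified with the CDG-module DG category $(A,W)\text{-mod}^{cdg}_{fgp}$: Proposition \ref{prop:equiv_for_affine} gives the quasi-equivalence and Proposition \ref{prop:q_is_for_q_eq} turns it into a strict quasi-isomorphism of Hochschild mixed complexes with $u$-connection. Next, the natural comparison between the ordinary Hochschild complex and the Hochschild complex of the second kind becomes a quasi-isomorphism on this category by Proposition \ref{prop:Pol-Pos}; this is the single most substantial step and crucially uses the hypothesis that the critical locus lies in $W^{-1}(0)$. Then the pseudo-equivalences of Proposition \ref{prop:cdg_qdg_ps_eq}, combined with Proposition \ref{prop:q_is_for_ps_eq}, shrink $\Hoch^{\Pi}$ of the large CDG-module category down to $\Hoch^{\Pi}$ of the one-object CDG algebra $(A,-W)$; the sign flip $W \mapsto -W$ arises because Yoneda takes values in modules over the opposite category, and $(A,W)^{op} = (A,-W)$ by the convention $h_{X^{op}} = -h_X$. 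Finally, I would apply Proposition \ref{prop:opposite} to $(A,-W)$ to flip the sign back, yielding a weak quasi-isomorphism $\Hoch^{\Pi}(A,-W) \stackrel{\sim}{\to} \Hoch^{\Pi}(A,W)$, and then the HKR map of Proposition \ref{prop:HKR_map} produces the terminal weak quasi-isomorphism $\Hoch^{\Pi}(A,W) \stackrel{\sim}{\to} (\Omega_A^{\bullet},-dW,d,\nabla_u^{DR})$.

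The main obstacle is essentially bookkeeping: tracking sign conventions through the Yoneda embedding and the opposite-category step so that the composite delivers $+W$ rather than $-W$ in the target curvature, and keeping straight which arrows are strict and which are only weak (so that the overall composite is only asserted to be a zigzag of weak quasi-isomorphisms). The genuinely deep content --- the Polishchuk--Positselski comparison of Hochschild complexes of the first and second kinds under the critical-locus hypothesis, the curved HKR map with its explicit $u$-homotopy, and the compatibility of pseudo-equivalences with $u$-connections --- is already packaged into the cited propositions, so no further algebraic input is required at this stage.
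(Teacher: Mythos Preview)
Your proposal is correct and follows essentially the same route as the paper's own proof: the identical chain
\[
\Hoch(MF_{coh}(X,W)) \leftarrow \Hoch((A,W)\text{-mod}^{cdg}_{fgp}) \to \Hoch^{\Pi}((A,W)\text{-mod}^{cdg}_{fgp}) \to \Hoch^{\Pi}((A,W)\text{-mod}^{qdg}_{fgp}) \leftarrow \Hoch^{\Pi}(A,-W) \to \Hoch^{\Pi}(A,W) \to \Omega_A^{\bullet},
\]
invoking the same propositions in the same order (Propositions \ref{prop:equiv_for_affine}, \ref{prop:q_is_for_q_eq}, \ref{prop:Pol-Pos}, \ref{prop:cdg_qdg_ps_eq}, \ref{prop:q_is_for_ps_eq}, \ref{prop:opposite}, \ref{prop:HKR_map}). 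Your explanation of the sign flip $W\mapsto -W$ via $(A,W)^{op}=(A,-W)$ and the Yoneda embedding is exactly the mechanism the paper relies on implicitly.
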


\section{General case}
\label{sec:gen_case}

Here we will prove Theorem \ref{intro:main_mixed} in the general case. Let $X$ be a smooth separated scheme of finite type over a field $k$ of characteristic zero, and $W\in\cO(X).$ Again, we assume
that the critical locus of $W$ is contained in $W^{-1}(0).$

Consider the following presheaves of CDG categories:
$$\un{MF_{coh}}(X,W)(U):=MF_{coh}(U,W);\quad \un{MF}^{nv}(X,W)(U):=MF^{nv}(U,W);$$
$$(\cO_X,W)\un{\text{-mod}}^{qdg}_{l.f.}(U):=(\cO_U,W)\text{-mod}^{qdg}_{l.f.};\quad (\cO_X,\pm W)(U):=(\cO_X(U),\pm W).$$
Here the CDG category $(\cO_U,W)\text{-mod}^{qdg}_{l.f.}$ is defined in the same way as $\cD\text{-mod}^{qdg}_{fgp},$ but we take locally free sheaves of
 finite rank on $U$ instead of finitely generated projective modules. Again, we have natural morphisms of presheaves of CDG categories
 $$\un{MF}^{nv}(X,W)\to (\cO_X,W)\un{\text{-mod}}^{qdg}_{l.f.}\leftarrow (\cO_X,-W).$$

Now our goal is to construct a chain of quasi-isomorphisms between
$$\Hoch(MF_{coh}(X,W))\text{ and }\bR\Gamma(X,(\Omega_X^{\bullet},-dW,d,\nabla_u^{DR})).$$

Combining Proposition \ref{prop:suffices_for_affine} with quasi-isomorphisms \eqref{eq:affcase1}-\eqref{eq:affcase7} from the previous section, we obtain strict quasi-isomorphisms of sheaves of mixed complexes with $u$-connections:
\begin{equation}\label{eq:gencase_1}\Hoch(\un{MF}^{nv}(X,W))\to \Hoch(\un{MF_{coh}}(X,W));\end{equation}
\begin{equation}\Hoch(\un{MF}^{nv}(X,W))\to \Hoch^{\Pi}(\un{MF}^{nv}(X,W));\end{equation}
\begin{equation}\Hoch^{\Pi}(\un{MF}^{nv}(X,W))\to \Hoch^{\Pi}((\cO_X,W)\un{\text{-mod}}^{qdg}_{l.f.})\leftarrow \Hoch^{\Pi}(\cO_X,-W);\end{equation}
\begin{equation}\Hoch^{\Pi}(\cO_X,-W)\to \Hoch^{\Pi}(\cO_X,W),\end{equation}
\begin{equation}\label{eq:gencase_7}\Hoch^{\Pi}(\cO_X,W)\to (\Omega_X^{\bullet},-dW,d,\nabla_u^{DR}).\end{equation}

Applying the functor $\bR\Gamma(X,-)$ to the quasi-isomorphisms \eqref{eq:gencase_1}-\eqref{eq:gencase_7}, and using Proposition \ref{prop:RGamma_pres_q_is} we get a chain of quasi-isomorphisms between
$$\bR\Gamma(X,\Hoch(\un{MF_{coh}}(X,W)))\text{ and }\bR\Gamma(X,(\Omega_X^{\bullet},-dW,d,\nabla_u^{DR})).$$
Thus, we are left to prove the following result

\begin{prop}\label{prop:map_to_RGamma}The morphism
\begin{equation}\label{map_to_RGamma}\Hoch(MF_{coh}(X,W))\to\bR\Gamma(X,\Hoch(\un{MF_{coh}}(X,W)))\end{equation}
is a quasi-isomorphism.\end{prop}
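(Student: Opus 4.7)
The plan is to follow Keller's Zariski-descent strategy for Hochschild homology of $\Perf(X)$, adapted to matrix factorizations. The heart of the argument is a Mayer-Vietoris property for the presheaf $U\mapsto\Hoch(MF_{coh}(U,W))$ (compatible with $u$-connections); given this, a standard induction on the minimal size of a finite affine cover of $U$ identifies $\Hoch(MF_{coh}(U,W))$ with $\bR\Gamma(U,\Hoch(\un{MF_{coh}}(X,W)))$, applied to $U=X$.

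For the Mayer-Vietoris step, I would, for an open $V\subseteq X$ with closed complement $Z$, consider the full DG subcategory $MF_{coh,Z}(X,W)\subset MF_{coh}(X,W)$ of factorizations whose underlying coherent sheaf is set-theoretically supported on $Z$. Using the description of $MF_{coh}$ as a Drinfeld DG quotient following \cite{Pos}, \cite{Or2}, \cite{PV}, \cite{LP}, I would argue that the sequence
$$MF_{coh,Z}(X,W)\to MF_{coh}(X,W)\to MF_{coh}(V,W)$$
is exact up to Morita equivalence. For an open cover $X=V\cup V'$ with $Z=X\setminus V\subseteq V'$, restriction from $X$ to $V'$ should produce a Morita equivalence (excision) $MF_{coh,Z}(X,W)\simeq MF_{coh,Z}(V',W)$. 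Applying Theorem \ref{th:Keller_exact_seq} to the two localization sequences (for $X$ and for $V'$) and identifying their kernels via excision yields the Mayer-Vietoris homotopy-cartesian square
$$\begin{CD}
\Hoch(MF_{coh}(X,W)) @>>> \Hoch(MF_{coh}(V,W)) \\
@VVV @VVV \\
\Hoch(MF_{coh}(V',W)) @>>> \Hoch(MF_{coh}(V\cap V',W)),
\end{CD}$$
compatible with $u$-connections by Proposition \ref{prop:q_is_for_q_eq}.

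Given Mayer-Vietoris, I would proceed by induction on the minimal number $n$ of affine opens needed to cover a quasi-compact open $U\subseteq X$, showing that the map $\Hoch(MF_{coh}(U,W))\to\bR\Gamma(U,\Hoch(\un{MF_{coh}}(X,W)))$ is a quasi-isomorphism. The inductive step $n\geq 2$ is immediate: decompose $U=V_1\cup V_2$ with $V_1$ coverable by $n-1$ affines and $V_2$ affine, and apply the five-lemma to the Mayer-Vietoris exact triangles on both sides (the right one being standard sheaf Mayer-Vietoris for $\bR\Gamma$). The base case $n=1$ (affine $U$) is handled by picking a finite cover of $U$ by principal opens, with all iterated intersections affine since $X$ is separated, and comparing the two sides through the Čech complex: iterated Mayer-Vietoris writes the left-hand side as the totalization of the Čech complex of the presheaf, $\bR\Gamma$ of the sheafification is computed by the Čech complex of the sheaf, and these agree because Mayer-Vietoris is precisely derived descent, forcing the presheaf to coincide with its sheafification on each affine intersection up to quasi-isomorphism.

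The main obstacle is the Mayer-Vietoris step. Making the localization $MF_{coh,Z}\to MF_{coh}(X)\to MF_{coh}(V)$ and the excision equivalence precise at the level of small DG categories requires some care, because $MF_{coh}$ is defined as a Drinfeld DG quotient $MF_{coh}^{nv}/MF_{coh}^{ex}$, and one must verify that the formation of this quotient and the passage to $Z$-supported objects are compatible with restriction to open subsets in a manner fitting Keller's framework of exactness up to Morita equivalence. Once this categorical input is secured, the remainder of the argument is essentially bookkeeping, and compatibility with $u$-connections is automatic from the naturality of the constructions in Section \ref{sec:mixed_connections}.
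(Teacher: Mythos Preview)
Your overall strategy---localization sequences for $MF_{coh,Z}$, excision, Keller's exact triangle theorem, Mayer--Vietoris, and induction on the size of an affine cover---is exactly the paper's approach. (The paper also notes at the outset that the statement concerns only the underlying Hochschild complexes, so the $u$-connection bookkeeping you mention is not needed here.) One terminological point: the paper takes $MF_{coh,Z}$ to consist of factorizations whose \emph{homotopy} support, in the sense of \cite{Pos}, lies in $Z$, rather than those whose underlying coherent sheaf is set-theoretically supported on $Z$; it is for the former notion that the localization sequence and excision are supplied directly by \cite{Pos}.

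The genuine gap is your handling of the affine base case. You propose to cover an affine $U$ by principal opens and compare \v Cech complexes, asserting that ``Mayer--Vietoris forces the presheaf to coincide with its sheafification on each affine intersection.'' But those principal opens and their intersections are themselves affine, so your argument presupposes on each of them exactly the statement you are trying to establish on $U$; nothing in the sketch terminates the recursion. The paper sidesteps this entirely by invoking the affine computation already carried out in Theorem~\ref{th:main_affcase}: the sheaf-level quasi-isomorphisms \eqref{eq:gencase_1}--\eqref{eq:gencase_7} identify $\bR\Gamma(U,\Hoch(\un{MF_{coh}}(X,W)))$ with $\bR\Gamma(U,(\Omega_X^\bullet,-dW,\ldots))=(\Gamma(U,\Omega_X^\bullet),-dW,\ldots)$ for affine $U$ (since the $\Omega_X^p$ are coherent), while Theorem~\ref{th:main_affcase} identifies $\Hoch(MF_{coh}(U,W))$ with the same object, compatibly with the comparison map. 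It is this independent computation of both sides on affines that anchors the induction; your proposal lacks such an anchor.
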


\begin{proof}The statement is just about Hochschild complexes without additional structures, so we can forget about Connes differential and the connection.

In \cite{Pos}, the support of a matrix factorization $(E,\delta)$ is defined as the smallest closed subset $\supp(E,\delta)\subset X$ such that the restriction
of $(E,\delta)$ onto $X\setminus \supp(E,\delta)$ is isomorphic to zero (in the homotopy category of matrix factorizations). Further, for any closed subset $Z\subset X$ denote by $MF_{coh,Z}(X,W)\subset MF_{coh}(X,W)$
the full DG subcategory consisting of matrix factorizations with support contained in $Z.$

By \cite{Pos}, for any closed subset $Z\subset X$ we have a short exact sequence of DG categories
$$MF_{coh,Z}(X,W)\to MF_{coh}(X,W)\to MF_{coh}(X\setminus Z,W).$$
It follows from Theorem \ref{th:Keller_exact_seq} that we have an exact triangle of Hochschild complexes:
\begin{multline}\label{eq:Keller_Hochschild}\Hoch(MF_{coh,Z}(X,W))\to \Hoch(MF_{coh}(X,W))\to \Hoch(MF_{coh}(X\setminus Z,W))\to\\ \Hoch(MF_{coh,Z}(X,W))[1].\end{multline}
Further, if $X=U_1\cup U_2$ is an open covering, then putting $Z=X\setminus U_1$ and applying triangle \eqref{eq:Keller_Hochschild} to $X$ and $U_2$
we get a pair of exact triangles

\begin{multline*}\Hoch(MF_{coh,Z}(X,W))\to \Hoch(MF_{coh}(X,W))\to \Hoch(MF_{coh}(U_1,W))\to\\ \Hoch(MF_{coh,Z}(X,W))[1].\end{multline*}
\begin{multline*}\Hoch(MF_{coh,Z}(U_2,W))\to \Hoch(MF_{coh}(U_2,W))\to \Hoch(MF_{coh}(U_1\cap U_2,W))\to\\ \Hoch(MF_{coh,Z}(U_2,W))[1].\end{multline*}

Since the natural restriction DG functor
$$MF_Z(X,W)\to MF_Z(U_2,W)$$ is a quasi-equivalence, we get a Mayer-Vietoris exact triangle
\begin{multline*}\Hoch(MF_{coh}(X,W))\to \Hoch(MF_{coh}(U_1,W))\oplus \Hoch(MF_{coh}(U_2,W))\to\\ \Hoch(MF_{coh}(U_1\cap U_2,W))\to \Hoch(MF_{coh}(X,W))[1].\end{multline*}
It naturally maps to the usual Mayer-Vietoris exact triangle for the complex of sheaves $\Hoch(\un{MF_{coh}}(X,W)):$
\begin{multline*}\bR\Gamma(X,\Hoch(\un{MF_{coh}}(X,W)))\to\\ \bR\Gamma(U_1,\Hoch(\un{MF_{coh}}(X,W)))\oplus \bR\Gamma(U_2,\Hoch(\un{MF_{coh}}(X,W)))\to\\ \bR\Gamma(U_1\cap U_2,\Hoch(\un{MF_{coh}}(X,W)))\to \bR\Gamma(X,\Hoch(\un{MF_{coh}}(X,W)))[1].\end{multline*}
Therefore, to show that \eqref{map_to_RGamma} is a quasi-isomorphism for the pair $(X,W),$ it suffices to show it for the pairs $(U_1,W),$ $(U_2,W)$ and $(U_1\cap U_2,W).$ But the scheme $X$ is separated of finite type, so the problem reduces to affine varieties. And in the affine case this follows from Theorem \ref{th:main_affcase}. This proves the proposition.
\end{proof}

We have proved the following theorem (Theorem \ref{intro:main_mixed}):

\begin{theo}\label{th:main_mixed}Under the above assumptions, we have a chain of quasi-isomorphisms between mixed complexes with $u$-connections:
$$(\Hoch(MF_{coh}(X,W)),b,B,\nabla_u^{GM}),\quad \bR\Gamma(X,(\Omega_X^{\bullet},-dW,d,\nabla_u^{DR})).$$\end{theo}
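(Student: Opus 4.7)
The plan is to upgrade the affine chain of weak quasi-isomorphisms furnished by Theorem~\ref{th:main_affcase} to the global setting via a sheafification argument, in the spirit of Keller's treatment of perfect complexes \cite{Ke2}.

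First, all constructions entering Theorem~\ref{th:main_affcase} --- the passage from $MF^{nv}$ to $MF_{coh}$ (Proposition~\ref{prop:equiv_for_affine}), the comparison $\Hoch\to \Hoch^{\Pi}$ (Proposition~\ref{prop:Pol-Pos}), the pseudo-equivalences between $\cD\text{-mod}^{cdg}_{fgp}$, $\cD\text{-mod}^{qdg}_{fgp}$ and $\cD^{op}$ (Proposition~\ref{prop:cdg_qdg_ps_eq}), the opposite-algebra weak quasi-isomorphism (Proposition~\ref{prop:opposite}), and the HKR map (Propositions~\ref{prop:HKR_map} and \ref{prop:HKR_gen}) --- are all natural in the underlying (C)DG data. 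Hence they extend to strict or weak morphisms of the corresponding presheaves on $X$, namely $\un{MF_{coh}}(X,W)$, $\un{MF}^{nv}(X,W)$, $(\cO_X,W)\un{\text{-mod}}^{qdg}_{l.f.}$, $(\cO_X,\pm W)$, and the twisted de Rham sheaf. Since each induces a quasi-isomorphism on every affine open, Proposition~\ref{prop:suffices_for_affine} promotes them to quasi-isomorphisms of sheaves of mixed complexes with $u$-connections. Applying $\bR\Gamma(X,-)$ and invoking Proposition~\ref{prop:RGamma_pres_q_is} then delivers a chain of quasi-isomorphisms between $\bR\Gamma(X,\Hoch(\un{MF_{coh}}(X,W)))$ and $\bR\Gamma(X,(\Omega_X^{\bullet},-dW,d,\nabla_u^{DR}))$.

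The remaining crucial step --- and the main obstacle --- is to prove that the natural map
$$\Hoch(MF_{coh}(X,W))\to \bR\Gamma(X,\Hoch(\un{MF_{coh}}(X,W)))$$
is a quasi-isomorphism. The mixed and connection structures play no role here, so one reduces to ordinary Hochschild complexes. My plan is a Mayer-Vietoris induction. For a closed $Z\subset X$, let $MF_{coh,Z}(X,W)\subset MF_{coh}(X,W)$ denote the full DG subcategory of matrix factorizations supported on $Z$, in Positselski's sense. Localization along $Z$ gives a short exact sequence of DG categories
$$MF_{coh,Z}(X,W)\to MF_{coh}(X,W)\to MF_{coh}(X\setminus Z,W),$$
which via Keller's Theorem~\ref{th:Keller_exact_seq} yields an exact triangle of Hochschild complexes. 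Given an open cover $X=U_1\cup U_2$ and taking $Z=X\setminus U_1$, I would combine this triangle for $X$ and for $U_2$ with the excision quasi-equivalence $MF_{coh,Z}(X,W)\to MF_{coh,Z}(U_2,W)$ to obtain a Mayer-Vietoris triangle for $\Hoch(MF_{coh}(-,W))$, mapping into the standard Mayer-Vietoris triangle for $\bR\Gamma(-,\Hoch(\un{MF_{coh}}(X,W)))$. The five-lemma reduces the comparison for $(X,W)$ to that for $(U_1,W)$, $(U_2,W)$ and $(U_1\cap U_2,W)$. Since $X$ is separated of finite type, iterating on an affine cover reduces the entire problem to the affine case, where the statement is essentially tautological once one notes that $\Omega_X^p$ is acyclic on an affine scheme, so that $\bR\Gamma$ collapses and Theorem~\ref{th:main_affcase} applies directly.

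The genuine difficulty concentrates in the excision step: verifying that $MF_{coh,Z}(X,W)\to MF_{coh,Z}(U_2,W)$ is a quasi-equivalence whenever $Z\subset U_2$. This relies on Positselski's definition of the support of a coherent matrix factorization and the Drinfeld-quotient description of $MF_{coh}$; one must check that a coherent matrix factorization on $X$ vanishing off $Z$ can be replaced, up to objects killed by the Drinfeld quotient, by one extended from $U_2$. Once excision and Keller's exact triangle are in hand, the rest of the descent, the sheafification, and the passage to $\bR\Gamma$ proceed formally, and the theorem follows.
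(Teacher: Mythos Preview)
Your proposal is correct and follows essentially the same route as the paper: sheafify the affine chain of (weak) quasi-isomorphisms via Proposition~\ref{prop:suffices_for_affine}, push through $\bR\Gamma$ using Proposition~\ref{prop:RGamma_pres_q_is}, and then establish Proposition~\ref{prop:map_to_RGamma} by a Mayer--Vietoris argument built from Positselski's localization sequence, Keller's exact triangle, and the excision quasi-equivalence $MF_{coh,Z}(X,W)\to MF_{coh,Z}(U_2,W)$, reducing finally to the affine case. The paper treats the excision step as a known input from \cite{Pos} rather than something to be reproved, but otherwise your outline matches the paper's argument step for step.
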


This Theorem implies implies the following (Theorem \ref{intro:main_algebraic})

\begin{theo}\label{th:main_algebraic}Under the above assumptions, we have natural isomorphisms
$$(HP_{\bullet}(MF_{coh}(X,W)),\nabla_u^{GM})\cong (H^{\bullet}(X,(\Omega_X^{\bullet}((u)),-dW+ud)),\nabla_u^{DR}),$$
and $$HH_{\bullet}(MF_{coh}(X,W))\cong H^{\bullet}(X,(\Omega_X^{\bullet},-dW)).$$\end{theo}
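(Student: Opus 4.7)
The plan is to deduce Theorem \ref{th:main_algebraic} directly from Theorem \ref{th:main_mixed}, which has just been established. The deduction splits into two parts, one for each claimed isomorphism, and both are essentially formal.

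For the periodic cyclic homology statement, I would invoke Proposition \ref{prop:mixed_implies_k((u))} with $C' = \Hoch(MF_{coh}(X,W))$ equipped with its Getzler-Gauss-Manin connection, and the sheaf $\un{C}$ taken to be the twisted de Rham mixed complex sheaf $(\Omega_X^{\bullet}, -dW\wedge, d, \nabla_u^{DR})$ from Subsection \ref{subsec:sheafs_of_mixed_with_conn}. Theorem \ref{th:main_mixed} provides exactly the chain of weak quasi-isomorphisms required as input. Unwinding the conclusion of the proposition, the left-hand side is by definition $(HP_{\bullet}(MF_{coh}(X,W)), \nabla_u^{GM})$, while the right-hand side is $(H^{\bullet}(X,(\Omega_X^{\bullet}((u)), -dW+ud)), \nabla_u^{DR})$ viewed as a $\Z/2$-graded vector bundle with connection on $\Spf k((u))$.

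For the Hochschild homology statement, the plan is even simpler: forget the Connes differentials, the $u$-connections, and the formal variable $u$ in the chain of quasi-isomorphisms produced by Theorem \ref{th:main_mixed}. By definition of weak morphism of mixed complexes with $u$-connections, each step degenerates at $u=0$ to an ordinary quasi-isomorphism of $\Z/2$-graded complexes with respect to the $b$-differential (the homotopy $H$ is irrelevant once one forgets $\nabla_u$). Passing to cohomology then gives $HH_{\bullet}(MF_{coh}(X,W)) \cong H^{\bullet}(\bR\Gamma(X,(\Omega_X^{\bullet}, -dW)))$, and the latter equals the hypercohomology $H^{\bullet}(X,(\Omega_X^{\bullet}, -dW))$ because $\bR\Gamma$ was defined via the truncated Godement resolution, which consists of acyclic sheaves and hence computes hypercohomology.

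The main subtlety — really the only one — is to make sure the connection on the right-hand side of the periodic cyclic homology isomorphism is correctly identified with the one induced from $\nabla_u^{DR}$. But this compatibility is built into the statement of Proposition \ref{prop:mixed_implies_k((u))}, which tracks the $u$-connection through the chain of weak quasi-isomorphisms automatically. Thus there is no substantive new argument to carry out: the entire content of Theorem \ref{th:main_algebraic} is packaged in Theorem \ref{th:main_mixed} together with the bookkeeping supplied by Proposition \ref{prop:mixed_implies_k((u))}.
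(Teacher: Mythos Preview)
Your proposal is correct and matches the paper's own proof almost verbatim: the paper deduces the Hochschild isomorphism directly from Theorem \ref{th:main_mixed} and the periodic cyclic isomorphism by combining Theorem \ref{th:main_mixed} with Proposition \ref{prop:mixed_implies_k((u))}. You have simply unpacked what these two invocations mean, and there is nothing further to add.
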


\begin{proof}Indeed, the second isomorphism directly follows from Theorem \ref{th:main_mixed}, and the first one is a combination of Theorem \ref{th:main_mixed} and Proposition \ref{prop:mixed_implies_k((u))}.\end{proof}

Further, we have the following corollary over complex numbers (Theorem \ref{intro:main_complex}).

\begin{theo}\label{th:main_complex}Let $X$ be a smooth quasi-projective algebraic variety over $\C.$ Then we have an identification of $\Z/2$-graded vector bundles with connection on the formal punctured disk:
$$(HP_{\bullet}(MF_{coh}(X,W)),\nabla_u^{GM})\cong \widehat{RH}^{-1}(H^{\bullet-1}_{an}(W^{-1}(0),\phi_W\C_X),T\cdot (-1)^{\bullet}).$$\end{theo}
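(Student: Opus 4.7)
The plan is to deduce this theorem by combining Theorem \ref{th:main_algebraic} with Sabbah's Theorem \ref{th:Sabbah}, and then accounting for the discrepancy between the two connections that appear. There is no surprise here; the whole content is to match the connections.

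First I would reduce to the situation where the critical locus of $W$ is contained in $W^{-1}(0)$. This is needed because both Theorem \ref{th:main_algebraic} and Sabbah's Theorem \ref{th:Sabbah} make this assumption, while Theorem \ref{th:main_complex} does not. The reduction is clean: if $U \subset X$ is any Zariski open neighborhood of $W^{-1}(0)$, then $MF_{coh}(X,W) \to MF_{coh}(U,W)$ is a quasi-equivalence (noted in Subsection \ref{subsec:DGcat_and_MF}), so the left-hand side of the theorem is unchanged by passing to $U$; and since $\phi_W\C_X$ is supported on $W^{-1}(0)$, the vanishing cohomology $H^{\bullet-1}_{an}(W^{-1}(0),\phi_W\C_X)$ is unchanged as well. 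Choosing $U$ so that $\Crit(W) \cap U \subset W^{-1}(0)$ (possible because $X$ is quasi-projective and the critical values are isolated), we reduce to the stated hypothesis.

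Next, Theorem \ref{th:main_algebraic} gives the identification
\[
(HP_{\bullet}(MF_{coh}(X,W)),\nabla_u^{GM}) \;\cong\; \bigl(H^{\bullet}(X,(\Omega_X^{\bullet}((u)),-dW+ud)),\;\tfrac{d}{du}+\tfrac{\Gamma}{u}+\tfrac{W}{u^2}\bigr),
\]
while Sabbah's Theorem \ref{th:Sabbah} identifies the same underlying bundle, but with the connection $\tfrac{d}{du}+\tfrac{W}{u^2}$, with $\widehat{RH}^{-1}\bigl(H^{\bullet-1}_{an}(W^{-1}(0),\phi_W\C_X),T\bigr)$. So what remains is to see that adding $\tfrac{\Gamma}{u}$ to the connection modifies the monodromy on the irregular part exactly by $(-1)^{\bullet}$.

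This is the only step with any content, and the main obstacle is to handle it cleanly in the formal-disk setting. The key point is that $\Gamma$ is a fiberwise endomorphism that commutes with $W/u^2$ (both act by scalars on each $\Omega^p$). On the formal punctured disk, if $\nabla = \frac{d}{du} + B(u)$ has monodromy $T$ and $A$ is a $u$-independent endomorphism commuting with $B(u)$, then $\nabla' := \nabla + \tfrac{A}{u}$ is gauge equivalent to $\nabla$ via the (multi-valued) transformation $u^{-A}$, and an elementary computation with flat sections shows the monodromy becomes $e^{-2\pi i A}\cdot T$. Specializing to $A = \Gamma$ with $\Gamma_{|\Omega^p} = -p/2\cdot\id$ yields
\[
e^{-2\pi i \Gamma}\big|_{\Omega^p} \;=\; e^{\pi i p} \;=\; (-1)^p,
\]
which, after passing to cohomology and restricting to the induced $\Z/2$-grading, is exactly the claimed twist $(-1)^{\bullet}$. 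Composing the isomorphism of Theorem \ref{th:main_algebraic} with the one from Theorem \ref{th:Sabbah} and then tracking the monodromy through this gauge change produces the identification
\[
(HP_{\bullet}(MF_{coh}(X,W)),\nabla_u^{GM}) \;\cong\; \widehat{RH}^{-1}\bigl(H^{\bullet-1}_{an}(W^{-1}(0),\phi_W\C_X),\;T\cdot(-1)^{\bullet}\bigr),
\]
as desired. The only subtlety worth checking carefully is that the grading used to define $\Gamma$ (the form degree $p$) and the grading that appears in $(-1)^{\bullet}$ (the $\Z/2$-grading on vanishing cohomology, coming from the cohomological degree on $H^{\bullet-1}_{an}$) agree under Sabbah's isomorphism; this follows from the explicit shape of Sabbah's construction and the fact that the HKR-type identifications respect form degree modulo $2$.
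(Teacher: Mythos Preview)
Your proposal is correct and follows essentially the same route as the paper's own proof: combine Theorem \ref{th:main_algebraic} with Sabbah's Theorem \ref{th:Sabbah}, and observe that the extra term $\Gamma/u$ in the connection twists the monodromy by $(-1)^{\bullet}$. You supply more detail than the paper does (the explicit reduction to $\Crit(W)\subset W^{-1}(0)$ and the gauge computation $e^{-2\pi i\Gamma}|_{\Omega^p}=(-1)^p$), but the argument is the same.
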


\begin{proof}This follows from the first isomorphism of Theorem \ref{th:main_algebraic} and from Theorem \ref{th:Sabbah} (result of Sabbah).
Indeed, adding the term $\frac{\Gamma}{u}$ to the connection on the twisted de Rham complex corresponds to multiplying the monodromy by $(-1)^{\bullet}.$ This proves the theorem.\end{proof}

\section{Concluding remarks}
\label{sec:concluding}

Here we write down some remarks.

First, we know by Weibel \cite{W2} and Keller \cite{Ke2} that for any complex quasi-projective variety $X$ (not necessarily smooth) the periodic cyclic homology
$HP_{\bullet}(\Perf(X))$ is identified with $H^{\bullet}_{top}(X,\C).$

The naive hope would be that the similar result should hold for locally free matrix factorizations. More precisely, one can define the full DG subcategory $$MF_{l.f.}(X,W)\subset MF_{coh}(X,W)$$ to consist of matrix factorizations locally isomorphic (in the derived sense) to the
free matrix factorizations of finite rank. For smooth $X,$  these two categories coincide. However, it is shown in \cite{Pos} that the categories $MF_{l.f.}(X,W)$ in general behave very bad. In particular, the example discussed in \cite{Pos}, Section 3.3 shows that: Thomason localization theory fails; there may not exist a single generating object; periodic cyclic homology can be infinite-dimensional. Thus, we cannot expect any kind of analogue of Theorem \ref{intro:main_complex} for the categories $MF_{l.f.}(X,W)$ for singular $X.$

On the other hand, the categories $MF_{coh}(X,W)$ behave very well for arbitrary separated $X$ of finite type over a field of characteristic zero. We expect an isomorphism
$$(HP_{\bullet}(MF_{coh}(X,W)),\nabla_u)\cong \widehat{RH}^{-1}((H^{\bullet-1}_{c}(W^{-1}(0),\phi_W\C_X))^{\vee},T^{\vee}\cdot (-1)^{\bullet}).$$

Of course, these isomorphisms should admit a version with support: for any closed subscheme $Z\subset X,$ we can take matrix factorizations with support on $Z$ in the LHS, and the cohomology supported on $Z$ in the RHS.

The $\Z$-grading on the vanishing cohomology should come from lambda-decomposition for the sheaf of Hoschschild complexes of the sheaf of curved DG algebras
$(\cO_X,W),$ as in \cite{W2} for $W=0.$ In our case of smooth varieties this is straightforward.

We also note that Thom-Sebastiani theorem for vanishing cohomology \cite{M} over complex numbers can be viewed as a combination of the K\"unneth isomorphism for peridoic cyclic homology (\cite{HS}, \cite{Sh2}) for matrix factorizations categories,
and the Thom-Sebastiani Theorem for matrix factorizations \cite{Pr}.

Also, in the case of compact critical locus and smooth $X,$ the Kontsevich-Soibelman degeneration conjecture  for the DG category $MF_{coh}(X,W)$ is a combination of our Theorem \ref{intro:main_mixed} and the theorem of Barannikob-Kontsevich (unpublished) on degeneration for the twisted de Rham complex, proved also in \cite{Sab1} and \cite{OV}.

Finally, for a smooth algebraic variety over $\C,$ and a regular function $W,$ the categorical Chern character gives a map
$$K_0(MF_{coh}(X,W)^c)\to H^{odd}_{an}(W^{-1}(0),\phi_W\C_X)^T$$
(here we get odd cohomology because in the case of $W=0$ the standard degree on cohomology is shifted by $1.$). The superscript $^c$
means that we take the Karoubi completion. This map can be shown to map to Hodge classes. The analogue of Hodge conjecture (see also \cite{KKP})
states that this Chern character $\otimes\Q$ is surjective onto rational Hodge classes (at least in the case of compact critical locus).

\end{document}